\gdef\pampmatrix{%
  \begingroup
  \let&=\amsamp
  \begin{pmatrix}%
}
\gdef\endpampmatrix{\end{pmatrix}\endgroup}
\newcommand{\R}{\mathbb{R}}
\newcommand{\F}{\mathbb{F}}
\newcommand{\Z}{\mathbb{Z}}
\newcommand{\C}{\mathbb{C}}
\newcommand{\N}{\mathbb{N}}
\newcommand{\Hom}{\operatorname{Hom}}
\newcommand{\co}{\colon\thinspace}
\newcommand{\Fin}{\mathcal{F}\mathrm{in}}
\newcommand{\Topp}{\operatorname{Top}}
\newcommand{\Gr}{\operatorname{Gr}}
\newcommand{\norm}[1]{\left\lVert#1\right\rVert}
\theoremstyle{plain}
\newtheorem{theorem}{Theorem}[section]
\newtheorem*{theorem*}{Theorem}
\newtheorem{proposition}[theorem]{Proposition}
\newtheorem{lemma}[theorem]{Lemma}
\newtheorem{corollary}[theorem]{Corollary}
\theoremstyle{definition}
\newtheorem{example}[theorem]{Example}
\newtheorem{construction}[theorem]{Construction}
\theoremstyle{remark}
\newtheorem{remark}[theorem]{Remark}
\DeclareMathAlphabet{\mathpzc}{OT1}{pzc}{m}{it}
\title[Commuting varieties and the rank filtration]{Commuting varieties and the rank filtration of topological $K$-theory}
\author[S.~Gritschacher]{Simon Gritschacher}
\address{Department of Mathematics\\
University of Munich, Munich, Germany }       
\email{simon.gritschacher@math.lmu.de}
\begin{document}

\maketitle

\begin{abstract}
We consider the space of $n$-tuples of pairwise commuting elements in the Lie algebra of $U(m)$. We relate its one-point compactification to the subquotients of certain rank filtrations of connective complex $K$-theory. We also describe the variant for connective real $K$-theory.
\end{abstract}


\section{Introduction}

The aim of this paper is to make a connection between varieties of commuting matrices and the rank filtration in topological $K$-theory. Let $ku$ be the connective complex $K$-theory spectrum and let $ku_n$ denote its $n$-th space, i.e., a connective $n$-fold delooping of $\Z\times BU$. By choosing a concrete model of $ku$, one can define for each $n\geq 1$ an ``unstable rank filtration" of $ku_n$, i.e., a filtration of the infinite loop space $ku_n$ by subspaces $R^s(ku_n)\subseteq ku_n$, $s\geq -1$. For example, for $n=1$ this is the familiar filtration of the stable unitary group $ku_1=U$ by the subspaces
\[
R^s(U)=\left\{ A\in U\mid \dim_{\C}(\ker(A-\mathrm{Id})^\perp)\leq s\right\}\, .
\]
The subquotients of this filtration are the Thom spaces
\[
R^s(U)/R^{s-1}(U)=\mathfrak{u}_s^+\wedge_{U(s)} EU(s)_+\,,
\]
where $\mathfrak{u}_s$ is the adjoint representation of the unitary group $U(s)$ (see e.g. \cite{Miller}). In this paper we describe more generally the subquotients of the rank filtration of $ku_n$ in terms of the commuting variety
\[
C_n(\mathfrak{u}_s)=\{(X_1,\dots,X_n)\in \mathfrak{u}_s^n\mid [X_i,X_j]=0\textnormal{ for all }1\leq i,j\leq n\}\, .
\]
We endow $C_n(\mathfrak{u}_s)$ with the Euclidean topology, and we let  $U(s)$ act on $C_n(\mathfrak{u}_s)$ through the adjoint representation. This extends to an action on the one-point compactification $C_n(\mathfrak{u}_s)^+$.

\begin{theorem} \label{thm:intro1}
For $n\geq 1$, the subquotients of the rank filtration of $ku_n$ are
\[
R^s(ku_n)/R^{s-1}(ku_n) \cong  C_n(\mathfrak{u}_s)^+\wedge_{U(s)} EU(s)_+\, .
\]
\end{theorem}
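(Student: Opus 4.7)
The plan is to fix a concrete model of $ku_n$ in which the rank filtration is definitional at the level of cells, and then identify each subquotient by an equivariant Thom collapse. The model I would use generalises Miller's picture of $ku_1 = U$: an iterated bar / permutative-category construction on the topological groupoid of finite-dimensional Hermitian vector spaces, where an $n$-cell over a space $V$ is indexed by an $n$-tuple of pairwise commuting unitary automorphisms of $V$. In such a model $R^s(ku_n)$ is tautologically the subcomplex generated by cells representable on a Hermitian space of dimension $\leq s$, i.e.\ cells whose parametrising tuple $(g_1,\ldots,g_n)$ lies in the commuting variety $C_n(U(s))$ after stripping off trivial common $+1$-eigenspaces.

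With the model in hand, the subquotient $R^s(ku_n)/R^{s-1}(ku_n)$ retains only cells whose parametrising tuple has trivial common $+1$-eigenspace, i.e.\ lies in some $C_n^\circ(U(s)) \subseteq C_n(U(s))$; all remaining cells are already in $R^{s-1}$ and collapse. A direct manipulation of the bar construction (compare the $n=1$ case in \cite{Miller}) then identifies this quotient with the Borel construction
\[
C_n^\circ(U(s))^+ \wedge_{U(s)} EU(s)_+\, .
\]

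To finish, I would linearise $U(s)$ to $\mathfrak{u}_s$ via the exponential map. Since $\exp\colon \mathfrak{u}_s \to U(s)$ is $U(s)$-equivariant, sends pairwise commuting tuples to pairwise commuting tuples, and restricts to a diffeomorphism between $\mathfrak{u}_s$ and an open subset of $U(s)$, a $U(s)$-equivariant radial deformation pushing all points of $C_n^\circ(U(s))$ outside this open region into the basepoint at infinity would produce a pointed $U(s)$-homeomorphism $C_n(\mathfrak{u}_s)^+ \cong C_n^\circ(U(s))^+$. Smashing with $EU(s)_+$ over $U(s)$ and combining with the previous step yields the theorem.

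The hardest part will be constructing this equivariant linearisation and verifying that it, together with the Borel-construction identification of $R^s/R^{s-1}$, is compatible with the iterated bar structure of $ku_n$ across varying $s$. For $n=1$ this is Miller's theorem; for $n\geq 2$ the additional technical input is that a $U(s)$-invariant neighbourhood of $0 \in \mathfrak{u}_s$ exponentiates to a $U(s)$-invariant neighbourhood of $\mathrm{Id} \in U(s)$ while preserving the pairwise commuting relations, and that $C_n^\circ(U(s))$ equivariantly deformation-retracts onto the $\exp$-image of $C_n(\mathfrak{u}_s)$. Performing this deformation while keeping track of basepoints, of the bar/simplicial structure, and of the stabilisation maps $U(s)\hookrightarrow U(s+1)$ is where the real work lies.
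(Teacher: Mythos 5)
Your strategy parallels the paper's: model $ku_n$ by commuting $n$-tuples of unitaries, observe that the rank filtration measures the size of the part on which the tuple acts non-trivially, identify $R^s/R^{s-1}$ with a Borel construction over $U(s)$, and then linearise from $U(s)$ to $\mathfrak{u}_s$. But two specific steps would fail as written. First, the stratum that survives in $R^s/R^{s-1}$ is not the locus of tuples with trivial common $+1$-eigenspace $\bigcap_i\ker(A_i-\mathrm{Id})$, but the locus where each $A_i-\mathrm{Id}$ is individually non-singular, i.e.\ $\sum_i\ker(A_i-\mathrm{Id})=0$. These agree for $n=1$ (Miller's case, which is why the generalisation is easy to misjudge), but for $n\ge 2$ they differ: $A_1=\operatorname{diag}(1,-1)$, $A_2=\operatorname{diag}(-1,1)$ in $U(2)$ has trivial common $+1$-eigenspace yet contributes rank $0$, not $2$. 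The underlying reason is that in the $\Gamma$-space picture a labelled point $(V_i,x_i)$ with $x_i\in S^n\cong S(\C)^{\wedge n}$ collapses to the basepoint as soon as any single coordinate $x_{ij}$ hits $1\in S(\C)$, so the filtration strips off the \emph{sum}, not the intersection, of the $+1$-eigenspaces of the $A_j$.

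Second, the linearisation step breaks: the exponential $\exp\colon\mathfrak{u}_s\to U(s)$ is $2\pi i$-periodic and hence not injective, so it does not restrict to a diffeomorphism from $\mathfrak{u}_s$ onto an open subset of $U(s)$; and collapsing points outside an open neighbourhood to the point at infinity gives a quotient map, not the claimed pointed $U(s)$-homeomorphism $C_n(\mathfrak{u}_s)^+\cong C_n^\circ(U(s))^+$. What makes the argument work is to replace $\exp$ by the Cayley transform $\psi_s(X)=(X-\mathrm{Id})(X+\mathrm{Id})^{-1}$, which is an honest $U(s)$-equivariant diffeomorphism from all of $\mathfrak{u}_s$ onto exactly the open set $\{A\in U(s):A-\mathrm{Id}\text{ non-singular}\}$. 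Applied coordinatewise, $\psi_s$ carries $C_n(\mathfrak{u}_s)$ homeomorphically onto the correct open stratum $C_n(U(s))\setminus sC_n(U(s))$, where $sC_n(U(s))$ is the closed set of tuples with some $A_i-\mathrm{Id}$ singular; one-point compactification (run over finite-dimensional $V\subset\mathrm{Sym}(\C^n)$ and passed to the colimit) then gives the homeomorphism with $C_n(\mathfrak{u}_s)^+\wedge_{U(s)}L(\C^s,\mathrm{Sym}(\C^n))_+\cong C_n(\mathfrak{u}_s)^+\wedge_{U(s)}EU(s)_+$.
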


It should be noted that when $n=1$, a theorem of Miller \cite{Miller} asserts that the rank filtration gives a stable splitting $U_+\simeq \bigvee_{s\geq 0} \mathfrak{u}_s^+\wedge_{U(s)} EU(s)_+$. When $n=2$, one can show that the induced filtration on the cohomology of $ku_2=BU$ is precisely the monomial length filtration in the Chern classes. Because this filtration is not preserved by the action of the Steenrod algebra, it cannot split stably. Presumably, the filtration never splits when $n\geq 2$, but we do not show this in this paper.

The ``unstable" rank filtrations of $ku_n$ for various $n$ assemble into the stable rank filtration of the spectrum $ku$ (see \cite{Rognes} for the algebraic analogue, and \cite{AL,Hausmann} where it is termed the modified stable rank filtration of $ku$). The subquotients of the stable rank filtration were first described by Arone and Lesh \cite{AL} (see below). In this paper we arrive at the following description. Let $C_\infty(\mathfrak{su}_s)$ be the space of eventually zero infinite sequences $(X_1,X_2,X_3,\dots )$ of pairwise commuting traceless skew-hermitian $s\times s$ matrices. 

\begin{theorem} \label{thm:intro2}
The subquotients of the stable rank filtration of $ku$ are
\[
R^s(ku)/R^{s-1}(ku)\simeq \Sigma^\infty \left(C_{\infty}(\mathfrak{su}_s)^+\wedge_{U(s)}EU(s)_+\right)\,.
\]
\end{theorem}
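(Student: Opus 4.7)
The plan is to apply Theorem \ref{thm:intro1} level by level, identify the structure maps that assemble the unstable subquotients into a spectrum, and then spectrify to a suspension spectrum. By construction the stable rank filtration is built from the unstable ones, so $R^s(ku)/R^{s-1}(ku)$ is represented level-wise by $R^s(ku_n)/R^{s-1}(ku_n)\cong C_n(\mathfrak{u}_s)^+\wedge_{U(s)}EU(s)_+$. As orthogonal $U(s)$-representations one has $\mathfrak{u}_s=\mathfrak{su}_s\oplus i\R\cdot\mathrm{Id}$ with trivial adjoint action on the central summand, so scalar matrices commute with anything and there is a $U(s)$-equivariant homeomorphism
\[
C_n(\mathfrak{u}_s)\cong C_n(\mathfrak{su}_s)\times\R^n\,.
\]
Writing $X_s^{(n)}:=C_n(\mathfrak{su}_s)^+\wedge_{U(s)}EU(s)_+$, one-point compactification and the smash product then give $R^s(ku_n)/R^{s-1}(ku_n)\cong \Sigma^n X_s^{(n)}$.

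Next I would identify the prespectrum structure map
\[
\Sigma\bigl(R^s(ku_n)/R^{s-1}(ku_n)\bigr)\longrightarrow R^s(ku_{n+1})/R^{s-1}(ku_{n+1})
\]
with $\Sigma^{n+1}$ of the natural inclusion $X_s^{(n)}\hookrightarrow X_s^{(n+1)}$ induced by padding a tuple of traceless commuting matrices with a zero in the $(n+1)$-st slot. Granting this compatibility, $R^s(ku)/R^{s-1}(ku)$ is represented by the prespectrum whose $n$-th space is $\Sigma^n X_s^{(n)}$ and whose structure maps are suspensions of these inclusions; its associated spectrum is therefore
\[
\mathrm{hocolim}_n\,\Sigma^\infty X_s^{(n)}\;\simeq\;\Sigma^\infty\,\mathrm{colim}_n X_s^{(n)}\;=\;\Sigma^\infty\bigl(C_\infty(\mathfrak{su}_s)^+\wedge_{U(s)}EU(s)_+\bigr)\,,
\]
since one-point compactification (interpreted as the colimit of the $C_n(\mathfrak{su}_s)^+$) and the construction $(-)\wedge_{U(s)}EU(s)_+$ both commute with the relevant filtered colimits of closed inclusions, and $\mathrm{colim}_n C_n(\mathfrak{su}_s)=C_\infty(\mathfrak{su}_s)$ by definition.

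The main obstacle is this compatibility of the structure maps. It requires unwinding the concrete model of $ku$ whose spaces $ku_n$ carry the rank filtration and verifying that the spectrum structure map $\Sigma ku_n\to ku_{n+1}$ induces, on the rank-$s$ subquotient and in the commuting-variety language of Theorem \ref{thm:intro1}, the operation of adjoining one further commuting element drawn from the center $i\R\cdot\mathrm{Id}$ of $\mathfrak{u}_s$. Under the product splitting $C_n(\mathfrak{u}_s)\cong C_n(\mathfrak{su}_s)\times\R^n$ that operation is precisely the pad-by-zero inclusion on the traceless factor smashed with an extra suspension coordinate from the central factor, so once this geometric statement is pinned down the remainder of the proof is formal.
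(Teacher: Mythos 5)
Your proposal follows the same route as the paper's: peel off the central summand of $\mathfrak{u}_s$ to write $C_n(\mathfrak{u}_s)\cong C_n(\mathfrak{su}_s)\times\R^n$, identify the structure maps of the subquotient spectrum as (suspensions of) the pad-by-zero inclusions $C_n(\mathfrak{su}_s)^+\to C_{n+1}(\mathfrak{su}_s)^+$, and pass to the colimit. This is precisely the content of Proposition \ref{prop:stablequotient} and Construction \ref{cons:quotientspectrum}. The formula $\mathrm{hocolim}_n\Sigma^\infty X_s^{(n)}\simeq\Sigma^\infty\mathrm{colim}_n X_s^{(n)}$ is also justified along the lines you indicate, since the inclusions $C_n(\mathfrak{su}_s)^+\hookrightarrow C_{n+1}(\mathfrak{su}_s)^+$ can be made $U(s)$-cofibrations.

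There is, however, a genuine gap that your argument papers over: you pass from the symmetric spectrum $R^s(ku)/R^{s-1}(ku)$ to its underlying prespectrum and then read off the stable homotopy type from the latter. For symmetric spectra this step is not automatic — the naively defined homotopy groups of the underlying prespectrum need not agree with the "true" homotopy groups, so the underlying prespectrum does not in general determine the stable equivalence class. One needs to know that $R^s(ku)/R^{s-1}(ku)$ is \emph{semi-stable}. The paper handles this by observing that $R^{s-1}(ku)$ and $R^s(ku)$ are semi-stable (being restrictions of orthogonal spectra) and that the cofibre of a map of semi-stable spectra is again semi-stable (Proposition \ref{prop:homotopytype}). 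Without that input your chain of identifications could compute the wrong object.

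A related point you treat as formal is the connectivity estimate. The paper's Lemma \ref{lem:connectivity} shows that the maps $C_n(\mathfrak{su}_s)^+\wedge_{U(s)}L(\C^s,\mathrm{Sym}(\C^n))_+\to C_{n+1}(\mathfrak{su}_s)^+\wedge_{U(s)}L(\C^s,\mathrm{Sym}(\C^{n+1}))_+$ are $(n-1)$-connected, using that $R^s(ku)$ comes from a special $\Gamma$-space and is therefore an $\Omega$-spectrum above level $0$. This is what makes the map from the prespectrum to $\Sigma^\infty(C_\infty(\mathfrak{su}_s)^+\wedge_{U(s)}EU(s)_+)$ a $\pi_*$-isomorphism; it is worth stating explicitly rather than folding it into "the relevant colimits commute." Finally, you defer the verification that the spectrum structure map really is the pad-by-zero map (your acknowledged "main obstacle"); the paper carries this out via the explicit formulas for the structure maps of $\overline{\mathscr{C}}_\ast$ in Section \ref{sec:model} together with the Cayley transform used in Theorem \ref{thm:unstablerankfiltrationku}, and this check is where most of the actual work lives.
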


In Proposition \ref{prop:universal1} we observe that the $U(s)$-space $C_\infty(\mathfrak{su}_s)^+$ is the suspension of a universal space for the collection of complete subgroups of $U(s)$ (see Section \ref{sec:homotopytype} for terminology). This implies that
\[
C_\infty(\mathfrak{su}_s)^+\simeq |\mathcal{L}_s|^\lozenge\,,
\]
where $|\mathcal{L}_s|^\lozenge$ denotes the unreduced suspension of the complex of proper orthogonal decompositions of $\C^s$ introduced by Arone \cite{AroneTop}. Thus, Theorem \ref{thm:intro2} is really the familiar result of Arone and Lesh \cite{AL}, who identified the subquotients of the stable rank filtration with the suspension spectra $\Sigma^\infty(|\mathcal{L}_s|^\lozenge\wedge_{U(s)}EU(s)_+)$.

As a by-product, by combining the equivalence $C_{\infty}(\mathfrak{su}_p)^+\simeq |\mathcal{L}_p|^\lozenge$ with the known cohomology of $|\mathcal{L}_p|$ from \cite{AroneTop} we obtain an independent computation of the mod-$p$ cohomology of the manifold of complete unordered flags in $\C^p$, obtained recently by Guerra and Jana \cite{JanaGuerra}. This is Corollary \ref{cor:flag}.

In Section \ref{sec:realvariants} we prove the real analogues of Theorems \ref{thm:intro1} and \ref{thm:intro2}. Let $C_n(\mathfrak{sym}_s)$ denote the space of $n$-tuples of pairwise commuting real symmetric $s\times s$-matrices. We then have, for example:
\begin{theorem} \label{thm:intro3}
For $n\geq 1$, the subquotients of the rank filtration of $ko_n$ are
\[
R^s(ko_n)/R^{s-1}(ko_n) \cong C_n(\mathfrak{sym}_s)^+\wedge_{O(s)}EO(s)_+\,.
\]
\end{theorem}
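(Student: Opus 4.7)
The plan is to mirror the proof of Theorem~\ref{thm:intro1}, systematically replacing the unitary ingredients with orthogonal analogues. Presumably Theorem~\ref{thm:intro1} is proved by fixing an explicit infinite loop space model of $ku_n$, defining $R^s(ku_n)$ via a bound on an underlying ``rank'' statistic of the matrix data, and computing the filtration quotient by a cofiber argument that identifies it as a Thom-space construction on $C_n(\mathfrak{u}_s)$. For $ko_n$ I would run the same argument with real inner product spaces in place of Hermitian ones, $O(m)$ in place of $U(m)$, and the $O(s)$-representation on real symmetric matrices in place of the $U(s)$-representation $\mathfrak{u}_s$.

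The concrete steps: (i) fix a point-set model of $ko$ as a (symmetric or orthogonal) spectrum built out of real inner product spaces, so that $ko_n$ inherits a natural filtration $R^s(ko_n)$ indexed by the rank of the underlying real data; (ii) carry over the cofiber sequence relating $R^{s-1}(ko_n) \hookrightarrow R^s(ko_n)$ to the $s$-th stratum, exactly as in the unitary proof; (iii) identify this stratum with $C_n(\mathfrak{sym}_s)^+ \wedge_{O(s)} EO(s)_+$, using that $\mathfrak{sym}_s$ is the appropriate real form of the tangent representation controlling infinitesimal deformations in the chosen model, and that the $n$-fold loop coordinates impose pairwise commutativity. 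The Thom space interpretation of the one-point compactification, the freeness of the $O(s)$-action on $EO(s)_+$, and the passage to the adjoint representation then proceed formally as in the unitary case.

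The main obstacle, and the reason the statement involves $\mathfrak{sym}_s$ rather than the Lie algebra $\mathfrak{o}_s$ of $O(s)$, is step (iii): one must verify that the infinitesimal data parametrizing the $s$-th stratum lands in the space of symmetric matrices rather than skew-symmetric ones. I expect this to hinge on the fact that the natural ``Lie algebra'' direction for the real model is the tangent space to a symmetric space of the form $U(s)/O(s)$, i.e.\ hermitian matrices that are real, which is precisely $\mathfrak{sym}_s$. Once this identification is in place, the $n$-fold commuting condition should translate to ordinary matrix commutation via the same infinitesimal calculation as in the unitary setting, and the remainder of the proof is a mechanical translation — replacing $U$ by $O$ and $\mathfrak{u}_s$ by $\mathfrak{sym}_s$ throughout.
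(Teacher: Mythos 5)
Your strategy matches the paper's: Theorem~\ref{thm:intro3} is indeed proved by rerunning the proof of Theorem~\ref{thm:unstablerankfiltrationku} with the real $\Gamma$-space model of $ko$, the rank now measured by real dimension, and the stratum identified with a commuting variety. You also correctly flag the one genuinely non-formal point, namely why $\mathfrak{sym}_s$ rather than $\mathfrak{o}_s$ appears.

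Where the paper is a bit more precise than your sketch is in the mechanism behind that point. Rather than reasoning infinitesimally about tangent spaces, the paper identifies $ko_n$ (via complexification) with the subspace $\overline{\mathscr{C}}_n^{\mathrm{sym}} \subseteq \overline{\mathscr{C}}_n$ of commuting tuples of \emph{complex-symmetric} unitary operators, and then uses that the Cayley transform $\psi_s(X)=(X-\mathrm{Id})(X+\mathrm{Id})^{-1}$ commutes with transposition. A skew-Hermitian matrix that is also symmetric is exactly $iX$ with $X$ real symmetric, so $X\mapsto \psi_s(iX)$ is an $O(s)$-equivariant homeomorphism from $\mathfrak{sym}_s$ onto $\{A\in U(s)^{\mathrm{sym}} : A-\mathrm{Id}\ \text{non-singular}\}$, which parametrizes the open $s$-th stratum. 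This is the global statement your $U(s)/O(s)$-tangent-space heuristic is pointing at (the tangent space at the base point is $i\cdot\mathfrak{sym}_s$), and once it is in place the rest really is, as you say, the same one-point-compactification argument as in the complex case. The only caution for a literal ``replace $U$ by $O$'' translation: the commuting-operator model for $ko_n$ is built from symmetric \emph{unitary} operators, not orthogonal ones, since eigenspaces of a general orthogonal matrix need not be complexifications of real subspaces; only the structure group acting on the stratum is $O(s)$.
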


When $n=1$, then this is again the familiar rank filtration of the Lagrangian Grassmannian $ko_1=U/O$, which is known not to split stably at the prime $2$ (for a proof see \cite[Section 2]{Mitchell}).

We want to make clear that the filtrations considered here are well-studied, in particular by Arone and Lesh, and equivariantly by Hausmann and Ostermayr \cite{Hausmann}. For example, our space $C_n(\mathfrak{su}_s)^+$ turns out to be homeomorphic to the space $\overline{\mathcal{L}(s,S^n)}$ appearing in \cite[Section 3.1]{Hausmann}. However, the description using commuting varieties is new and elucidating this connection is the purpose of this article. As we showed in a joint paper with Adem and G{\'o}mez \cite{AGG}, the spaces $C_n(\mathfrak{u}_s)^+$ appear in the subquotients of a natural filtration of the representation space $\Hom(\Z^n,U(s))$. A motivation for the present work is the computation of the (equivariant) homology of $C_n(\mathfrak{u}_s)^+$, which we hope to pursue in a future paper.

In the following sections we chose to work with symmetric spectra (as opposed to orthogonal ones as in \cite{Hausmann}), because the action of the symmetric group $\Sigma_n$ on $C_n(\mathfrak{u}_s)^+$ is most easily described, and because it allows us to describe a model for the $K$-theory spectrum $ku$ which is based on spaces of commuting isometries (see Section \ref{sec:model}). Most of the results could be deduced from those in \cite{Hausmann} by proving once that their space $\overline{\mathcal{L}(s,S^n)}$ is homeomorphic to $C_n(\mathfrak{su}_s)^+$ (which is implicit in the proofs of Lemma \ref{lem:model} and Theorem \ref{thm:unstablerankfiltrationku}); but to keep our exposition self-contained we make the structure of the various spectra explicit in the commuting variety picture.

\section{Connective $K$-theory via commuting isometries} \label{sec:model}

We recall the construction of $ku$ as a commutative symmetric ring spectrum as detailed in  \cite[Section 6.3]{Global}. We will then describe in Construction \ref{cons:model} an isomorphic model which is based on commuting elements in unitary groups.

For two finite dimensional complex inner product spaces $V$ and $W$ let $L(V,W)$ denote the (compact) space of linear isometric embeddings $V\hookrightarrow W$ carrying the topology of the Stiefel manifold of $\dim(V)$-dimensional frames in $W$. If $W$ has countably infinite dimension, then $L(V,W)$ is defined as the colimit of $L(V,W')$ as $W'$ ranges over the finite dimensional subspaces of $W$. The unitary group $U(V)$ acts on $L(V,W)$ by precomposition, with quotient space
\[
\Gr_m(W)=L(\C^m,W)/U(m)\,,
\]
the Grassmannian of $m$-dimensional subspaces of $W$. If $W$ has countably infinite dimension, then $\Gr_m(W)$ is a model for the classifying space $BU(m)$.

Let $\Fin_\ast$ denote a skeleton of the category of finite pointed sets; its objects are the sets $\langle k\rangle:=\{0,1,\dots,k\}$ for $k\in \N$ with basepoint $0$ and its morphisms are the basepoint preserving maps. Let $\textnormal{Top}$ be the category of compactly generated weak Hausdorff spaces (as defined in \cite[Appendix A]{Global}). Let $\mathcal{U}$ be a finite or countably infinite dimensional complex inner product space. The $\Gamma$-space
\[
ku(-,\mathcal{U})\co \Fin_\ast\to \Topp
\]
assigns to the finite pointed set $\langle k\rangle$ the space
\[
ku(\langle k\rangle,\mathcal{U})=\bigsqcup_{n_1,\dots,n_k\geq 0} L(\C^{n_1}\oplus\cdots \oplus \C^{n_k},\mathcal{U})/(U(n_1)\times \cdots \times U(n_k))\,,
\]
i.e., the space of $k$-tuples of mutually orthogonal, finite dimensional subspaces of $\mathcal{U}$. For a based map $\alpha\co \langle k\rangle\to \langle l\rangle$ the map $ku(\alpha,\mathcal{U})$ is defined by sending $(V_i)_{i=1}^k$ to $(W_i)_{i=1}^l$ where $W_i=\bigoplus_{\alpha(j)=i} V_j$.

There is a standard way of extending $ku(-,\mathcal{U})$ to a functor on the category $\Topp_\ast$ of pointed spaces, namely by the coend
\[
ku(X,\mathcal{U})=\int^{\langle k\rangle\in \Fin_\ast} ku(\langle k\rangle,\mathcal{U})\times \mathrm{Map}_\ast(\langle k\rangle,X)\,.
\]
The elements of $ku(X,\mathcal{U})$ are equivalence classes of the form
\[
[(V_1,x_1),\dots,(V_k,x_k)]\,,
\]
where $(V_1,x_1),\dots,(V_k,x_k)$ is an unordered configuration of points $x_i\in X$ with labels $V_i\in \bigsqcup_{m\geq 0}\mathrm{Gr}_m(\mathcal{U})$ such that $V_i\perp V_j$ whenever $i\neq j$. The identifications that are made in the coend are listed, for example, in \cite[p. 837]{GH19}. The element $[(0,\ast)]$, where $\ast\in X$ is the basepoint, equips $ku(X,\mathcal{U})$ with a natural basepoint.

Suppose that $\C^n$ is equipped with the standard hermitian inner product. Let $\mathrm{Sym}(\C^n)$ denote the symmetric algebra of $\C^n$. There is a preferred inner product on $\mathrm{Sym}(\C^n)$, described in \cite[Proposition 6.3.8]{Global}, for which the isomorphism of algebras
\[
\psi_{n,m}\co \mathrm{Sym}(\C^n)\otimes_{\C} \mathrm{Sym}(\C^m)\xrightarrow{\cong} \mathrm{Sym}(\C^n\oplus \C^m)\,,
\]
induced by the two obvious inclusions $\C^n\to \C^n\oplus \C^m$ and $\C^m\to \C^n\oplus \C^m$, is an isometry. Moreover, this preferred inner product is natural with respect to linear isometric embeddings $\C^n\to \C^m$. In particular, the standard action of the symmetric group $\Sigma_n$ on $\C^n$ which permutes the coordinates extends to an action on $\mathrm{Sym}(\C^n)$ by isometries. If $\sigma\in \Sigma_n$, then we denote by $\sigma_\ast\co \mathrm{Sym}(\C^n)\to \mathrm{Sym}(\C^n)$ the induced isometry of $\mathrm{Sym}(\C^n)$.

The symmetric ring spectrum $ku$ is defined by setting
\[
ku_n=ku(S^n,\mathrm{Sym}(\C^n))\, .
\]
The action of the symmetric group $\Sigma_n$ on $ku_n$ is through both the action on $S^n$ (by viewing $S^n$ as the one-point compactification of the $n$-dimenensional standard representation of $\Sigma_n$) and the induced action on $\mathrm{Sym}(\C^n)$. Precisely, if $\sigma\in \Sigma_n$ and $[(V_1,x_1),\dots,(V_k,x_k)]\in ku_n$, then
\[
\sigma\cdot [(V_1,x_1),\dots,(V_k,x_k)]=[(\sigma_\ast(V_1),\sigma\cdot x_{1}),\cdots, (\sigma_\ast(V_k),\sigma\cdot x_{k})]\, .
\]
There are $(\Sigma_n\times \Sigma_m)$-equivariant multiplication maps
\[
\mu_{n,m}\co ku_n\wedge ku_m\to ku_{n+m}
\]
defined by
\[
[(V_1,x_1),\dots,(V_k,x_k)]\wedge [(W_1,y_1),\dots,(W_l,y_l)] \mapsto [(\psi_{n,m}(V_i\otimes W_j),x_i\wedge y_j)_{i,j}]\,,
\]
where on the right the indices range through $1\leq i\leq k$ and $1\leq j\leq l$. Here we are using implicitly the canonical homeomorphism $S^n\wedge S^m\cong S^{n+m}$. There is also a $\Sigma_n$-equivariant unit map
\[
\iota_n\co S^n\to ku_n
\]
defined by $x\mapsto [\C\cdot 1,x]$, where $\C\cdot 1\subseteq \mathrm{Sym}(\C^n)$ is the degree-$0$ summand. The $(\Sigma_n\times \Sigma_m)$-equivariant structure maps
\[
\sigma_{n,m}\co ku_n\wedge S^m\to ku_{n+m}
\]
are determined by $\sigma_{n,m}=\mu_{n,m}\circ (id \wedge \iota_m)$. This data makes $ku$ a commutative symmetric ring spectrum.

\begin{construction} \label{cons:model}
Let $\mathcal{U}$ be a finite or countably infinite dimensional complex inner product space and let
\[
U(\mathcal{U})=\mathrm{colim}_{\substack{V\subseteq \mathcal{U}\\ \dim_{\C}(V)< \infty}} U(V)
\]
be the ``stable" unitary group of $\mathcal{U}$. For $n\geq 1$ consider the space of $n$-tuples of pairwise commuting elements of $U(\mathcal{U})$,
\[
C_n(U(\mathcal{U}))=\{(A_1,\dots,A_n)\in U(\mathcal{U})^n\mid A_iA_j=A_jA_i\textnormal{ for all }1\leq i,j\leq n\}\, .
\]
On $C_n(U(\mathcal{U}))$ define the equivalence relation $(A_1,\dots,A_n)\sim(B_1,\dots,B_n)$ if
\[
\sum_{i=1}^n \ker(A_i-\mathrm{Id})=\sum_{i=1}^n \ker(B_i-\mathrm{Id})
\]
as subspaces of $\mathcal{U}$ and $A_i$ agrees with $B_i$ on the orthogonal complement of this subspace for all $i=1,\dots,n$. We denote the quotient space by
\[
\overline{\mathscr{C}}_n(\mathcal{U})=C_n(U(\mathcal{U}))/{\sim}.
\]
If $\mathcal{U}=\mathrm{Sym}(\C^n)$, then we write simply $\overline{\mathscr{C}}_n$. We define an action of $\Sigma_n$ on $\overline{\mathscr{C}}_n$ by
\[
\sigma\cdot [(A_1,\dots,A_n)]=[(\sigma_\ast A_{\sigma(1)} \sigma_\ast^{-1},\dots,\sigma_\ast A_{\sigma(n)} \sigma_\ast^{-1})]\, .
\]
It will be convenient to set $\overline{\mathscr{C}}_0:=S^0$.
\end{construction}

For example, $\overline{\mathscr{C}}_1(\mathcal{U})=U(\mathcal{U})$.

\begin{lemma} \label{lem:model}
For every $n\geq 0$, there is a $\Sigma_n$-equivariant based homeomorphism
\[
ku_n\cong \overline{\mathscr{C}}_n\, .
\]
\end{lemma}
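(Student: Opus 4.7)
The plan is to construct mutually inverse continuous maps $\Phi_n\colon ku_n \to \overline{\mathscr{C}}_n$ and $\Psi_n\colon \overline{\mathscr{C}}_n \to ku_n$ via simultaneous diagonalisation. The case $n=0$ is immediate from the definitions, so I assume $n\geq 1$. Fix the $\Sigma_n$-equivariant homeomorphism $S^n = I^n/\partial I^n \cong (S^1)^{\wedge n}$ given coordinate-wise by $t\mapsto e^{2\pi i t}$, so that a non-basepoint $x\in S^n$ corresponds to a tuple $(\lambda^{(1)}(x),\dots,\lambda^{(n)}(x))\in(S^1\setminus\{1\})^n$. Define $\Phi_n$ on a representative configuration by
\[
\Phi_n([(V_1,x_1),\dots,(V_k,x_k)]) := [(A_1,\dots,A_n)],\qquad A_j := \mathrm{Id}\oplus\bigoplus_{i=1}^k \lambda^{(j)}(x_i)\,\mathrm{Id}_{V_i},
\]
where $\mathrm{Id}$ acts on $(\bigoplus_i V_i)^\perp \subseteq \mathrm{Sym}(\C^n)$. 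The $A_j$'s pairwise commute since they are simultaneously diagonal in this decomposition. The coend relations are respected: combining two terms $(V_i,x),(V_j,x)$ with the same label into $(V_i\oplus V_j,x)$ leaves the tuple unchanged, and if $x_i=\ast$ then some $\lambda^{(j)}(x_i)=1$, so $V_i\subseteq\sum_j\ker(A_j-\mathrm{Id})$, whence the relation $\sim$ of Construction~\ref{cons:model} absorbs $V_i$ into the identity block. Continuity of $\Phi_n$ is clear from the explicit formula.

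For the inverse, take a representative $(A_1,\dots,A_n)\in C_n(U(\mathrm{Sym}(\C^n)))$. All $A_j$'s equal the identity outside a common finite-dimensional invariant subspace, so I simultaneously diagonalise them to obtain a finite orthogonal decomposition $\bigoplus_\lambda V_\lambda$ indexed by $\lambda\in(S^1)^n$. Summands with some $\lambda_j=1$ lie in $\sum_j\ker(A_j-\mathrm{Id})$ and are discarded under $\sim$; the remaining $V_\lambda$'s are indexed by $\lambda\in(S^1\setminus\{1\})^n\subseteq S^n$, producing a configuration $[(V_\lambda,\lambda)]\in ku_n$. A direct inspection shows this defines a two-sided set-theoretic inverse $\Psi_n$ to $\Phi_n$, and the $\Sigma_n$-equivariance follows from matching the coordinate permutation action on $(S^1)^{\wedge n}$ with the conjugation formula $A_j\mapsto \sigma_\ast A_{\sigma(j)}\sigma_\ast^{-1}$ of Construction~\ref{cons:model}.

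The main technical subtlety I anticipate is continuity of $\Psi_n$: joint eigenspaces do not depend continuously on the operators across loci where eigenvalues coalesce. However, this indeterminacy is precisely absorbed by the coend topology on $ku_n$, in which points with identical labels merge continuously. I verify it by exhibiting the pre-quotient lift $\tilde\Psi_n\colon C_n(U(\mathrm{Sym}(\C^n)))\to ku_n$ as continuous stratum-by-stratum along the filtration by finite-dimensional invariant subspaces; on each stratum, continuity reduces to the standard joint spectral continuity for commuting normal operators on a fixed finite-dimensional Hilbert space.
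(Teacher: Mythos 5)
Your construction of $\Phi_n$ and the verification that the coend relations are respected match the paper's own proof; the only cosmetic difference is the choice of based identification $S^1\cong S(\C)$ (the paper uses the Cayley-type map $t\mapsto (it-1)(it+1)^{-1}$ rather than $t\mapsto e^{2\pi i t}$), and both reduce bijectivity to the spectral theorem. The one place you are thin is exactly where you flag the ``main technical subtlety'': asserting that continuity of $\Psi_n$ on a fixed finite-dimensional $V$ ``reduces to the standard joint spectral continuity for commuting normal operators'' restates the claim rather than proving it, since joint spectral projections do \emph{not} vary continuously where eigenvalues coalesce, and verifying that the coend topology absorbs this is precisely the nontrivial content. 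The paper outsources it to \cite[Proposition 2.4]{GH19}, which establishes that $\varphi_n|_V\co ku((S^1)^n,V)\to C_n(U(V))$ is a homeomorphism. If you want a self-contained fix that avoids proving inverse continuity by hand, observe instead that for finite-dimensional $V$ the space $ku(S^n,V)$ is a quotient of a finite disjoint union of compacta (products of Stiefel-manifold quotients with $(S^n)^k$, with $k$ and $\sum n_i$ bounded by $\dim V$), hence compact Hausdorff, as is $\overline{\mathscr{C}}_n(V)$; a continuous bijection between compact Hausdorff spaces is a homeomorphism, so continuity of $\Phi_n|_V$ — which you correctly call clear from the explicit formula — already suffices, and passing to the colimit over $V$ finishes the argument.
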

\begin{proof}
When $n=0$ the unit map $\iota_0\co S^0\to ku_0$ is a homeomorphism and the statement holds trivially. Thus, assume that $n\geq 1$. We choose a homeomorphism of $S^1=\R^+$ with the unit sphere $S(\C)\subseteq \C$ by sending $t\mapsto (it-1)(it+1)^{-1}$ and the point at infinity to $1\in S(\C)$. We identify $S^n$ with the $n$-fold smash power $S(\C)\wedge \dots \wedge S(\C)$, so that any point $x_i\in S^n$ can be written as $x_{i1}\wedge \dots \wedge x_{in}$ with $x_{ij}\in S(\C)$. The desired homeomorphism is given by the map $\overline{\varphi}_n\co ku_n\to \overline{\mathscr{C}}_n$ defined by
\[
[(V_1,x_1),\dots,(V_k,x_k)] \mapsto [(A_1,\dots,A_n)]\,,
\]
where $A_j$ is determined uniquely by
\begin{itemize}
\item $A_j|_{V_i}=x_{ij} \mathrm{Id}_{V_i}$ for $i=1,\dots,k$
\item $A_j$ acts as the identity on the orthogonal complement of $\bigoplus_{i=1}^k V_i$.
\end{itemize}
This map is well-defined and a bijection by the spectral theorem. It is also easily checked that $\overline{\varphi}_n$ is $\Sigma_n$-equivariant.

To see that $\overline{\varphi}_n$ is a homeomorphism note that the evident maps $\overline{\mathscr{C}}_n(V)\to \overline{\mathscr{C}}_n$, where $V$ ranges over the finite dimensional subspaces of $\mathrm{Sym}(\C^n)$, induce a homeomorphism
\[
\mathrm{colim}_{V} \overline{\mathscr{C}}_n(V)\xrightarrow{\cong} \overline{\mathscr{C}}_n\, .
\]
Similarly, there is a homeomorphism $\mathrm{colim}_{V} ku(S^n,V)\cong ku_n$. It suffices to show that for each finite dimensional subspace $V\subseteq \mathrm{Sym}(\C^n)$ the restriction
\[
\overline{\varphi}_n|_V\co ku_n(S^n,V)\to \overline{\mathscr{C}}_n(V)
\]
is a homeomorphism. But this map is induced by a homeomorphism
\[
\varphi_n|_V\co ku((S^1)^n,V)\xrightarrow{\cong} C_n(U(V))
\]
upon passage to quotients. The definition of the map $\varphi_n|_V$ and the verification that it is a homeomorphism are in \cite[Proposition 2.4]{GH19}.
\end{proof}

\begin{remark}
To our knowledge, the homeomorphism in the case $n=1$ is originally due to Harris \cite{Harris}. It is also explained in \cite[Remark 6.3.3]{Global}.
\end{remark}

By virtue of the homeomorphism $ku_n\cong \overline{\mathscr{C}}_n$ the spaces $\overline{\mathscr{C}}_0,\overline{\mathscr{C}}_1,\overline{\mathscr{C}}_2,\dots$ can be assembled into a commutative symmetric ring spectrum isomorphic to $ku$. Note that for every $[(A_1,\dots,A_n)]\in \overline{\mathscr{C}}_n$ there is a unique largest subspace
\begin{equation}
\label{eq:minimalspace}
F(A_1,\dots,A_n)\subseteq \mathrm{Sym}(\C^n)
\end{equation}
restricted to which $A_i-\mathrm{Id}$ is non-singular for all $i=1,\dots,n$. This subspace is independent of the choice of representative of the equivalence class $[(A_1,\dots,A_n)]$.

The $(\Sigma_n\times \Sigma_m)$-equivariant multiplication maps
\[
\overline{\mathscr{C}}_n\wedge \overline{\mathscr{C}}_m\to  \overline{\mathscr{C}}_{n+m}
\]
are given by
\begin{equation*}
\begin{split}
& [(A_1,\dots,A_n)]\wedge [(B_1,\dots,B_m)] \mapsto [(C_1,\dots,C_{n+m})]
\end{split}
\end{equation*}
where
\[
C_i=\begin{cases} \psi_{n,m}(A_i|_{F(A_1,\dots,A_n)}\otimes \mathrm{Id}_{F(B_1,\dots,B_m)})\psi_{n,m}^{-1}  & \textnormal{if } 1\leq i \leq n\,,\\ 
  \psi_{n,m}(\mathrm{Id}_{F(A_1,\dots,A_n)}\otimes B_{i-n}|_{F(B_1,\dots,B_m)})\psi_{n,m}^{-1} & \textnormal{if } n+1\leq i \leq n+m\, . \end{cases}
\]
Here, $A_i|_{F(A_1,\dots,A_n)}\otimes \mathrm{Id}_{F(B_1,\dots,B_m)}$ and $\mathrm{Id}_{F(A_1,\dots,A_n)}\otimes B_{i-n}|_{F(B_1,\dots,B_m)}$ are viewed as isometries of $\mathrm{Sym}(\C^n)\otimes \mathrm{Sym}(\C^m)$ by extending them by the identity on the orthogonal complement of $F(A_1,\dots,A_n)\otimes F(B_1,\dots,B_m)$.

The $\Sigma_n$-equivariant unit map
\[
S^n\to \overline{\mathscr{C}}_n
\]
is given by
\[
x=x_1\wedge \dots \wedge x_n \mapsto [(x_1 \mathrm{Id}_{\C\cdot 1},\dots,x_n \mathrm{Id}_{\C\cdot 1})] \,,
\]
where $x_j \mathrm{Id}_{\C\cdot 1}$ is viewed as an isometry of $\mathrm{Sym}(\C^n)$ by extending it by the identity on the orthogonal complement of $\C\cdot 1$.

In particular, the $(\Sigma_n\times \Sigma_m)$-equivariant structure maps
\[
\overline{\mathscr{C}}_n\wedge S^m\to \overline{\mathscr{C}}_{n+m}
\]
are given by
\[
[(A_1,\dots,A_n)]\wedge (x_1\wedge \dots \wedge x_m)\mapsto [(B_1,\dots,B_{n+m})]\,,
\]
where
\[
B_i= \begin{cases} \psi_{n,m}(A_i|_{F(A_1,\dots,A_n)} \otimes \mathrm{Id}_{\C\cdot 1})\psi_{n,m}^{-1}  & \textnormal{if } 1\leq i \leq n\,,\\ 
  \psi_{n,m}(\mathrm{Id}_{F(A_1,\dots,A_n)}\otimes x_{i-n}\mathrm{Id}_{\C\cdot 1} )\psi_{n,m}^{-1} & \textnormal{if } n+1\leq i \leq n+m\, , \end{cases}
\]
and where again, $A_i|_{F(A_1,\dots,A_n)} \otimes \mathrm{Id}_{\C\cdot 1}$ and $\mathrm{Id}_{F(A_1,\dots,A_n)}\otimes x_{i-n}\mathrm{Id}_{\C\cdot 1} $ are regarded as isometries of $\mathrm{Sym}(\C^n)\otimes \mathrm{Sym}(\C^m)$ by extending them by the identity on the orthogonal complement of $F(A_1,\dots,A_n)\otimes \C\cdot 1$.

\section{The unstable rank filtration of $ku_n$}

We consider now, for fixed $n\in \N$, a filtration of the space $ku_n$ by subspaces and identify the associated graded in terms of commuting varieties. For an integer $s\geq -1$ we consider the subspace
\[
R^s(ku_n)=\left\{  [(V_1,x_1),\dots,(V_k,x_k)]\in ku_n \,\Big|\,
\sum_{\substack{i=1\\ x_i\neq \textnormal{ basepoint}}}^k \dim_\C(V_i)\leq s\right\}\, .
\]
This gives a filtration $R^0(ku_n)\subseteq R^1(ku_n)\subseteq \cdots$ with $ku_n=\bigcup_{s\geq 0} R^s(ku_n)$. Note that the $\Sigma_n$-action on $ku_n$ restricts to $R^s(ku_n)$. 

The following lemma guarantees that the strict subquotients $R^s(ku_n)/R^{s-1}(ku_n)$ model the homotopy cofibre of the inclusion $R^{s-1}(ku_n)\to R^s(ku_n)$.

\begin{lemma} \label{lem:cofibration}
For every $s\in \N$, the inclusion $R^{s-1}(ku_n)\to R^s(ku_n)$ is a cofibration.
\end{lemma}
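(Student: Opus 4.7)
My plan is to exhibit $R^s(ku_n)$ as a pushout whose gluing map is a known cofibration. Throughout I will work in the commuting isometries model $\overline{\mathscr{C}}_n \cong ku_n$ of Lemma \ref{lem:model}, under which $R^s(ku_n)$ corresponds to the subspace of classes $[(A_1, \dots, A_n)]$ with $\dim_\C F(A_1,\dots,A_n) \le s$, where $F(A_1,\dots,A_n) = (\sum_i \ker(A_i - \mathrm{Id}))^\perp$, and $R^{s-1}(ku_n)$ corresponds to the locus of strict inequality.

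I will introduce the total space
\[
E^s = L(\C^s, \mathrm{Sym}(\C^n)) \times_{U(s)} C_n(U(s))
\]
together with its closed $U(s)$-invariant degenerate subspace
\[
E^s_\partial = L(\C^s, \mathrm{Sym}(\C^n)) \times_{U(s)} C_n^{\mathrm{deg}}(U(s)),
\]
where $C_n^{\mathrm{deg}}(U(s)) \subseteq C_n(U(s))$ denotes the commuting tuples with $\bigcap_i \ker(A_i - \mathrm{Id}) \ne 0$. The continuous map $\alpha\colon E^s \to R^s(ku_n)$ sending $[L, (A_1,\dots,A_n)]$ to the class of the tuple acting as $LA_iL^{-1}$ on $L(\C^s)$ and as the identity on $L(\C^s)^\perp$ should restrict to a bijection $E^s \setminus E^s_\partial \to R^s(ku_n) \setminus R^{s-1}(ku_n)$, because every class with $\dim F = s$ has a unique preimage up to $U(s)$ obtained by identifying $F$ with $\C^s$. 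Moreover $\alpha$ sends $E^s_\partial$ into $R^{s-1}(ku_n)$, since a common $1$-eigenvector forces $\dim F(B_1,\dots,B_n) < s$. Together these observations will identify $R^s(ku_n)$ with the pushout of $R^{s-1}(ku_n) \xleftarrow{\alpha} E^s_\partial \hookrightarrow E^s$ as a set; verifying that the pushout topology agrees with the subspace topology of $R^s(ku_n)$ inside $ku_n$ is the principal technical point, and will rest on the compactness of $C_n(U(s))$ together with standard properties of coend topologies.

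Given the pushout description, it will suffice to prove that $E^s_\partial \hookrightarrow E^s$ is a closed cofibration, since pushouts of closed cofibrations in compactly generated weak Hausdorff spaces are closed cofibrations. As $L(\C^s, \mathrm{Sym}(\C^n))$ is a free $U(s)$-CW complex (a colimit of complex Stiefel manifolds), the Borel construction reduces this to showing that the $U(s)$-equivariant closed inclusion $C_n^{\mathrm{deg}}(U(s)) \hookrightarrow C_n(U(s))$ of compact real algebraic varieties is a cofibration. I would invoke Illman's equivariant triangulation theorem: the $U(s)$-conjugation action admits a compatible $U(s)$-CW structure on the pair, so the inclusion is a $U(s)$-CW inclusion, and hence a $U(s)$-cofibration. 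The hardest part of the argument is the topological matching in Step 2; an alternative route avoiding this would be to construct an explicit NDR-pair structure directly in $\overline{\mathscr{C}}_n$ using a spectral function measuring the $s$-th largest gap $\min_i |\lambda_{i,j} - 1|$ over simultaneous eigenspaces, but engineering such a deformation to fix $R^{s-1}$ pointwise is delicate, which is why I prefer the pushout route.
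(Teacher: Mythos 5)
The paper itself does not give a self-contained argument for this lemma; it simply cites Hausmann--Ostermayr, where a version is proved for equivariant orthogonal spectra. Your pushout strategy is therefore genuinely different and, as a plan, reasonable: exhibit $R^s$ as a pushout of $R^{s-1}$ along a closed $U(s)$-cofibration obtained from a Borel construction over $L(\C^s,\mathrm{Sym}(\C^n))\simeq EU(s)$, and invoke equivariant triangulation for the compact pair of commuting varieties.

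There is, however, a concrete error in the definition of the degenerate subspace. You set $C_n^{\mathrm{deg}}(U(s))$ to be the tuples with $\bigcap_i\ker(A_i-\mathrm{Id})\neq 0$, i.e.\ those admitting a \emph{common} $1$-eigenvector. But for the tuple $(B_1,\dots,B_n)$ obtained by extending $(LA_iL^{-1})_i$ by the identity on $L(\C^s)^\perp$, one has
\[
F(B_1,\dots,B_n)\;=\;L\Bigl(\bigl(\textstyle\sum_i\ker(A_i-\mathrm{Id})\bigr)^\perp\Bigr),
\qquad
\dim_\C F(B_1,\dots,B_n)=s-\dim_\C\textstyle\sum_i\ker(A_i-\mathrm{Id}),
\]
so $\alpha([L,(A_1,\dots,A_n)])$ lands in $R^{s-1}(ku_n)$ exactly when $\sum_i\ker(A_i-\mathrm{Id})\neq 0$, i.e.\ when \emph{some} $A_i-\mathrm{Id}$ is singular. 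That condition is strictly weaker than the one you impose. For instance, with $n=2$ take $A_1$ with a nontrivial $1$-eigenspace and $A_2$ fixing nothing; then $(A_1,A_2)\notin C_n^{\mathrm{deg}}$ by your definition, yet $\alpha([L,(A_1,A_2)])\in R^{s-1}(ku_n)$. This breaks the claimed bijection $E^s\setminus E^s_\partial\to R^s(ku_n)\setminus R^{s-1}(ku_n)$, which is the load-bearing step in identifying $R^s$ with the pushout. The fix is to use the subspace $sC_n(U(s))$ where at least one $A_i-\mathrm{Id}$ is singular (this is precisely the closed subvariety used implicitly in the proof of Theorem~\ref{thm:unstablerankfiltrationku}); it is $U(s)$-invariant and closed, being cut out by $\prod_i\det(A_i-\mathrm{Id})=0$.

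With that correction, the remaining steps are plausible but still need to be carried out: the pushout topology should be checked on finite-dimensional $V\subseteq\mathrm{Sym}(\C^n)$ first (where compactness and Hausdorffness make the continuous bijection from the pushout a homeomorphism) and then passed to the filtered colimit; and for the equivariant cofibration one should note that the pair $(C_n(U(s)),sC_n(U(s)))$ consists of compact real algebraic $U(s)$-varieties, so a $U(s)$-CW pair structure is available via equivariant triangulation of semialgebraic $G$-sets rather than the smooth version of Illman's theorem, since these varieties are not smooth in general.
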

\begin{proof}
This is proved (in greater generality) in \cite[Section 7.1]{Hausmann}.
\end{proof}

Let $C_n(\mathfrak{u}_s)^+$ denote the one-point compactification of
\[
C_n(\mathfrak{u}_s)=\{(X_1,\dots,X_n)\in \mathfrak{u}_s^n\mid [X_i,X_j]=0\textnormal{ for all }1\leq i,j\leq n\}\,,
\]
the space of $n$-tuples of pairwise commuting skew-Hermitian $s\times s$ matrices. The unitary group $U(s)$ acts on $C_n(\mathfrak{u}_s)$ by conjugation of each component of a tuple, and this action extends to one on $C_n(\mathfrak{u}_s)^+$.

On the half-smash product
\[
C_n(\mathfrak{u}_s)^+\wedge_{U(s)} L(\C^s,\mathrm{Sym}(\C^n))_+
\]
we define an action of the symmetric group $\Sigma_n$ by
\[
\sigma\cdot [(X_1,\dots,X_n)\wedge f] = [(X_{\sigma(1)},\dots,X_{\sigma(n)})\wedge \sigma_\ast\circ f]\, .
\]
The following proves Theorem \ref{thm:intro1} from the introduction (also cf. \cite[Cor. 3.4]{Hausmann}).

\begin{theorem} \label{thm:unstablerankfiltrationku}
For $s\in \N$ and $n\in \N_{>0}$, there is a $\Sigma_n$-equivariant homeomorphism
\[
R^s(ku_n)/R^{s-1}(ku_n)\cong C_n(\mathfrak{u}_s)^+\wedge_{U(s)} L(\C^s,\mathrm{Sym}(\C^n))_+\, .
\]
\end{theorem}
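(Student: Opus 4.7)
By Lemma \ref{lem:model}, identify $ku_n \cong \overline{\mathscr{C}}_n$. Tracing through the homeomorphism $\overline{\varphi}_n$, the filtration $R^s(ku_n)$ corresponds to $\{[(A_1,\ldots,A_n)] \in \overline{\mathscr{C}}_n : \dim_{\C} F(A_1,\ldots,A_n) \leq s\}$, where $F$ is the subspace from \eqref{eq:minimalspace}; indeed $[(V_i,x_i)] \mapsto [(A_1,\ldots,A_n)]$ has $A_j$ acting as $x_{ij}\mathrm{Id}_{V_i}$ on $V_i$ and as the identity on the orthogonal complement of $\bigoplus_i V_i$, so $F = \bigoplus_i V_i$ has dimension $\sum \dim V_i$ (each $x_{ij} \neq 1$ since $x_i \neq \ast$). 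The plan is to realise the desired $\Sigma_n$-equivariant homeomorphism via the Cayley transform $c\colon \mathfrak{u}_s \to U(s)$, $c(X) = (X - I)(X + I)^{-1}$. Because $X$ is skew-Hermitian, $X + I$ is invertible, and the identity $c(X) - I = -2(X+I)^{-1}$ shows that $c$ maps $\mathfrak{u}_s$ diffeomorphically (and $U(s)$-equivariantly) onto the open subset of $U(s)$ of unitaries without $1$ as an eigenvalue; since $c(X)$ is a rational function of $X$, the map $c$ carries $C_n(\mathfrak{u}_s)$ homeomorphically onto the subspace of $C_n(U(s))$ of tuples no component of which has $1$ as an eigenvalue.

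Define $\Psi$ by sending the basepoint to the basepoint and a non-basepoint $[(X_1,\ldots,X_n) \wedge f]$ to the class $[(A_1,\ldots,A_n)] \in R^s(\overline{\mathscr{C}}_n)$ where $A_j$ acts as $f \circ c(X_j) \circ f^*$ on $\mathrm{im}(f)$ and as the identity on $\mathrm{im}(f)^\perp$ (here $f^*$ denotes the adjoint of the isometric embedding $f$). Since $c(X_j) - I$ is invertible on $\C^s$, one has $F(A_1,\ldots,A_n) = \mathrm{im}(f)$ of dimension $s$, so the class lies in $R^s \setminus R^{s-1}$. I would then verify: (a) $\Psi$ is independent of the representative on the $U(s)$-orbit, as $[((uX_ju^{-1}) \wedge f)]$ and $[((X_j) \wedge fu^{-1})]$ produce the same unitaries; (b) $\Psi$ is $\Sigma_n$-equivariant, as both action formulas conjugate by $\sigma_*$ and permute the indices consistently; (c) $\Psi$ is a bijection onto $R^s/R^{s-1}$: given $[(A_j)]$ with $\dim F = s$, commutativity of the $A_j$'s makes $F$ invariant with $A_j|_F - \mathrm{Id}_F$ invertible, so any isometry $f\colon \C^s \to F$ and the choice $X_j = c^{-1}(f^* A_j f)$ produce a preimage, unique up to the diagonal $U(s)$-action.

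The main technical step is to promote this continuous bijection to a homeomorphism; the key ingredient is continuity at the basepoint, which I would establish after restricting to a finite-dimensional subspace $V \subseteq \mathrm{Sym}(\C^n)$ where $L(\C^s, V)$ is a compact Stiefel manifold. There the source $C_n(\mathfrak{u}_s)^+ \wedge_{U(s)} L(\C^s, V)_+$ is compact, and the target $R^s(\overline{\mathscr{C}}_n(V))/R^{s-1}(\overline{\mathscr{C}}_n(V))$ is compact Hausdorff by Lemma \ref{lem:cofibration}. Continuity of the restricted map $\Psi_V$ away from the basepoint is automatic; at the basepoint it follows from the observation that if $(X_1^{(m)},\ldots,X_n^{(m)}) \to \infty$ in $C_n(\mathfrak{u}_s)$ and $f_m \to f^\infty$ in $L(\C^s, V)$, then some $\|X_{j_0}^{(m)}\| \to \infty$ and after passing to a subsequence $c(X_{j_0}^{(m)}) \to A_{j_0}^\infty$ with $1 \in \mathrm{spec}(A_{j_0}^\infty)$ (eigenvalues $i\lambda$ with $|\lambda| \to \infty$ satisfy $c(i\lambda) \to 1$); the limit unitaries then have moving subspace of dimension strictly less than $s$, so the image in $R^s/R^{s-1}$ converges to the basepoint. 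A continuous bijection of compact Hausdorff spaces is a homeomorphism, settling the finite-dimensional case, and the theorem follows by passing to the filtered colimit over finite-dimensional $V \subseteq \mathrm{Sym}(\C^n)$, which both the half-smash and the subquotient respect in the compactly generated category.
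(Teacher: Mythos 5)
Your proposal is correct and follows essentially the same route as the paper: translate the filtration into the bound $\dim_\C F(A_1,\dots,A_n)\le s$ via Lemma \ref{lem:model}, reduce to finite-dimensional $V\subseteq\mathrm{Sym}(\C^n)$, use the Cayley transform entrywise to relate $C_n(\mathfrak{u}_s)$ to the open locus of $C_n(U(s))$ where no $A_i$ has eigenvalue $1$, and conclude by compactness before taking the colimit. The one cosmetic difference is in the final compactness step: the paper builds a homeomorphism between the open complements $R^s(\overline{\mathscr{C}}_n(V))\setminus R^{s-1}(\overline{\mathscr{C}}_n(V))$ and $C_n(\mathfrak{u}_s)\times_{U(s)}L(\C^s,V)$ and then simply observes that $R^s/R^{s-1}$ is the one-point compactification of $R^s\setminus R^{s-1}$ (since $R^s$ is compact Hausdorff and $R^{s-1}$ is closed), whereas you construct the map directly on the quotients and verify continuity at the basepoint by hand (tracking eigenvalues of $X_j^{(m)}$ escaping to infinity) before invoking that a continuous bijection of compact Hausdorff spaces is a homeomorphism; both are valid, and the paper's packaging merely avoids the explicit basepoint estimate.
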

\begin{proof}
The statement holds trivially when $s=0$, so we will assume $s\geq 1$. Lemma \ref{lem:model} gives a $\Sigma_n$-equivariant homeomorphism $ku_n\cong \overline{\mathscr{C}}_n$ under which the subspace $R^s(ku_n)$ is mapped onto the subspace
\[
R^s(\overline{\mathscr{C}}_n)=\{[(A_1,\dots,A_n)]\in \overline{\mathscr{C}}_n\mid \dim_\C(F(A_1,\dots,A_n))\leq s\}
\]
(notation as in line (\ref{eq:minimalspace})). For a subspace $V\subseteq \mathrm{Sym}(\C^n)$ we define $R^s(\overline{\mathscr{C}}_n(V))\subseteq \overline{\mathscr{C}}_n(V)$ in the obvious fashion. Let $sC_n(U(s))\subseteq C_n(U(s))$ be the closed subspace consisting of those $n$-tuples $(A_1,\dots,A_n)$ such that $A_i-\mathrm{Id}$ is singular for at least one $i$. Then there is a homeomorphism
\[
R^s(\overline{\mathscr{C}}_n(V))-R^{s-1}(\overline{\mathscr{C}}_n(V)) \cong (C_n(U(s))-sC_n(U(s)))\times_{U(s)} L(\C^s,V)
\]
defined by
\begin{equation} \label{eq:homeo}
[(A_1,\dots,A_n)] \mapsto [(B_1,\dots,B_n),f]\,,
\end{equation}
where $f\co \C^s\to V$ is an isometric embedding with $\mathrm{im}(f)=F(A_1,\dots,A_n)$ and $B_i=(f')^{-1} A_i|_{F(A_1,\dots,A_n)} f'$ with $f'\co \C^s\to F(A_1,\dots,A_n)$ the co-restriction of $f$ onto its image.

Now we use the Cayley transform
\begin{equation} \label{eq:cayley}
\begin{split}
\psi_s\co  \mathfrak{u}_s & \to U(s) \\
 X & \mapsto (X-\mathrm{Id})(X+\mathrm{Id})^{-1}\, .
 \end{split}
\end{equation}
It is a $U(s)$-equivariant embedding of the space of $s\times s$ skew-Hermitian matrices onto the subspace of $U(s)$ consisting of all those matrices $A \in U(s)$ for which $A- \mathrm{Id}$ is non-singular. Applying the (inverse) Cayley transform to each entry of an $n$-tuple gives a $U(s)$-equivariant homeomorphism
\[
C_n(U(s))-sC_n(U(s)) \xrightarrow{\cong} C_n(\mathfrak{u}_s)\, .
\]
Together this gives a homeomorphism
\[
R^s(\overline{\mathscr{C}}_n(V))-R^{s-1}(\overline{\mathscr{C}}_n(V)) \cong C_n(\mathfrak{u}_s) \times_{U(s)} L(\C^s,V)\, .
\]
If $V$ is finite dimensional, then $L(\C^s,V)$ and $R^s(\overline{\mathscr{C}}_n(V))$ are compact. Since $R^{s-1}(\overline{\mathscr{C}}_n(V))$ is a closed subset of $R^{s}(\overline{\mathscr{C}}_n(V))$, one-point compactification gives a homeomorphism
\[
R^s(\overline{\mathscr{C}}_n(V))/R^{s-1}(\overline{\mathscr{C}}_n(V)) \cong C_n(\mathfrak{u}_s)^+ \wedge_{U(s)} L(\C^s,V)_+\, .
\]
This homeomorphism is natural with respect to isometric embeddings in $V$. By taking the colimit over all finite dimensional subspaces $V\subseteq \mathrm{Sym}(\C^n)$ we obtain a homeomorphism
\[
R^s(\overline{\mathscr{C}}_n)/R^{s-1}(\overline{\mathscr{C}}_n) \cong C_n(\mathfrak{u}_s)^+ \wedge_{U(s)} L(\C^s,\mathrm{Sym}(\C^n))_+\,.
\]
Away from the basepoint it is given by (\ref{eq:homeo}) (for $V=\mathrm{Sym}(\C^n)$). This map is easily seen to be $\Sigma_n$-equivariant. \end{proof}

\section{The stable rank filtration of $ku$} \label{sec:stablerankfiltrationku}

For each $s\in \N$, the structure maps of the spectrum $ku$ restrict to structure maps
\[
\sigma_{n,m}\co R^s(ku_n)\wedge S^m \to R^s(ku_{n+m})\,,
\]
defining a symmetric subspectrum $R^s(ku)$ of $ku$. By Lemma \ref{lem:cofibration}, the inclusion $R^{s-1}(ku)\to R^s(ku)$ is a levelwise cofibration. Hence, the symmetric spectrum obtained by forming the levelwise strict quotients is also the homotopy cofibre of the map of symmetric spectra $R^{s-1}(ku)\to R^s(ku)$. In this section we describe the subquotients $R^{s}(ku)/ R^{s-1}(ku)$ using commuting varieties.

Let $\mathfrak{su}_s$ denote the Lie algebra of the special unitary group $SU(s)$, i.e., the vector space of traceless skew-Hermitian $s\times s$ matrices. Let $C_n(\mathfrak{su}_s)$ denote the space of $n$-tuples of pairwise commuting elements in $\mathfrak{su}_s$. The group $U(s)$ acts on $C_n(\mathfrak{su}_s)$ componentwise by conjugation, inducing an action on the one-point compactification $C_n(\mathfrak{su}_s)^+$.

\begin{construction} \label{cons:quotientspectrum}
For every $s\in \N_{>0}$ we define a symmetric spectrum $C(s)$ by
\[
C(s)_n=(C_n(\mathfrak{su}_s)^+ \wedge S^n)\wedge_{U(s)}L(\C^s,\mathrm{Sym}(\C^n))_+\, ,
\]
where $U(s)$ acts trivially on $S^n$, and $\Sigma_n$ acts on $C(s)_n$ by
\[
\sigma\cdot [(X_1,\dots,X_n)\wedge x \wedge f]=[(X_{\sigma(1)},\dots,X_{\sigma(n)})\wedge \sigma\cdot x\wedge \sigma_\ast\circ f]\, .
\]
The structure map
\[
C(s)_n\wedge S^m\to C(s)_{n+m}
\]
is the smash product of the based map
\[
e_{n,m}\co C_n(\mathfrak{su}_s)^+\to C_{n+m}(\mathfrak{su}_s)^+
\]
sending
\[
(X_1,\dots,X_n) \mapsto (X_1,\dots,X_n,0,\dots,0)
\]
and the canonical homeomorphism $S^n\wedge S^m\cong S^{n+m}$, in formulas
\begin{equation*}
\begin{split}
& [(X_1,\dots,X_n)\wedge x \wedge f]\wedge  y \\& \quad\quad  \mapsto [(X_1,\dots,X_n,0,\dots,0)\wedge (x \wedge y) \wedge \psi_{n,m}\circ (f\otimes j_0)]\,,
\end{split}
\end{equation*}
where $j_0\co \C\to \mathrm{Sym}(\C^m)$ is the inclusion of the $\C\cdot 1$ summand.
\end{construction}

Note that $C(s)_0=\mathrm{pt}$ for all $s\geq 1$.

\begin{proposition} \label{prop:stablequotient}
For every $s\in \N_{>0}$, there is an isomorphism of symmetric spectra
\[
R^s(ku)/R^{s-1}(ku) \cong C(s)\,.
\]
\end{proposition}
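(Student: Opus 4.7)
The plan is to upgrade the levelwise homeomorphism of Theorem~\ref{thm:unstablerankfiltrationku} into an isomorphism of symmetric spectra, using the central decomposition of $\mathfrak{u}_s$ to produce the extra $S^n$-factor present in $C(s)_n$. The splitting $\mathfrak{u}_s=\mathfrak{su}_s\oplus\R\cdot i\mathrm{Id}_s$ is $U(s)$-equivariant (conjugation fixes the center) and the central summand commutes with every matrix, so decomposing $X_i=Y_i+t_i\cdot i\mathrm{Id}_s$ gives a $(U(s)\times\Sigma_n)$-equivariant homeomorphism $C_n(\mathfrak{u}_s)\cong C_n(\mathfrak{su}_s)\times\R^n$, with $U(s)$ trivial on $\R^n$ and $\Sigma_n$ permuting coordinates. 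One-point compactifying yields $C_n(\mathfrak{u}_s)^+\cong C_n(\mathfrak{su}_s)^+\wedge S^n$, and substituting into Theorem~\ref{thm:unstablerankfiltrationku} produces a $\Sigma_n$-equivariant homeomorphism $\Psi_n\co R^s(ku_n)/R^{s-1}(ku_n)\xrightarrow{\cong} C(s)_n$ for each $n$.

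The remaining task is to verify that the $\Psi_n$ commute with the structure maps. Taking a representative $[(A_1,\ldots,A_n)]\wedge(y_1\wedge\cdots\wedge y_m)$ with $y_j\neq 1$ for all $j$, and fixing an isometry $f\co\C^s\to F(A_1,\ldots,A_n)$, the explicit formula for $\sigma_{n,m}$ from Section~\ref{sec:model} shows that the new minimal space is $\psi_{n,m}(F\otimes\C\cdot 1)$, which still has dimension $s$, so the image sits outside $R^{s-1}(ku_{n+m})$. The natural isometry onto this new minimal space is exactly $\psi_{n,m}\circ(f\otimes j_0)\co\C^s\to\mathrm{Sym}(\C^{n+m})$, matching the $L(\C^s,-)$-factor of $C(s)_{n+m}$. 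Conjugating the first $n$ entries $B_i$ by this isometry recovers $f^{-1}A_i|_Ff$, whose inverse Cayley transform is the unchanged $X_i\in\mathfrak{u}_s$; conjugating the last $m$ entries yields the scalar matrices $y_j\cdot\mathrm{Id}_{\C^s}$.

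The key point is then that the inverse Cayley transform of a scalar matrix $y_j\mathrm{Id}_s$ is purely central, equal to $it_j\cdot\mathrm{Id}_s$ with $y_j=(it_j-1)(it_j+1)^{-1}$ — precisely the identification $S(\C)\cong S^1=(\R)^+$ chosen in the proof of Lemma~\ref{lem:model}. The central decomposition therefore sends the last $m$ entries to $(0,t_j)$, producing both the $m$ zero $\mathfrak{su}_s$-components of the map $e_{n,m}$ in Construction~\ref{cons:quotientspectrum} and the last $m$ coordinates of the $S^{n+m}$-factor; combined with the canonical $S^n\wedge S^m\cong S^{n+m}$, the structure-map diagram commutes on the dense open subset where $y$ is not the basepoint, and hence everywhere by continuity. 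The main obstacle is simply the bookkeeping between the three Cayley-type identifications in play — $\psi_s$ on $\mathfrak{u}_s$, its scalar restriction, and $S^1\cong S(\C)$ from Lemma~\ref{lem:model} — together with the associativity of the multiplication isomorphisms $\psi_{n,m}$; once the conventions are aligned, the required commutativity follows directly from the explicit formulas.
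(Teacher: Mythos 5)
Your proof is correct and follows essentially the same route as the paper: apply the levelwise homeomorphism of Theorem~\ref{thm:unstablerankfiltrationku}, split off the center via $\mathfrak{u}_s\cong\mathfrak{su}_s\oplus\mathfrak{z}$ to produce the extra $S^n$-factor, and track the structure maps through the Cayley transform. The only organizational difference is that the paper factors the argument through an auxiliary spectrum $\tilde{C}(s)_n=C_n(\mathfrak{u}_s)^+\wedge_{U(s)}L(\C^s,\mathrm{Sym}(\C^n))_+$, first checking $R^s(ku)/R^{s-1}(ku)\cong\tilde{C}(s)$ and then $\tilde{C}(s)\cong C(s)$, whereas you combine these two steps into a single verification.
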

\begin{proof}
We define an auxiliary symmetric spectrum $\tilde{C}(s)$ by
\[
\tilde{C}(s)_n=C_n(\mathfrak{u}_s)^+\wedge_{U(s)}L(\C^s,\mathrm{Sym}(\C^n))_+\, .
\]
The structure maps
\[
\tilde{C}(s)_n\wedge S^m\to \tilde{C}(s)_{n+m}
\]
are defined by
\begin{equation*}
\begin{split}
& [(X_1,\dots,X_n)\wedge f]\wedge x_1\wedge \cdots \wedge x_m \\& \quad\quad\quad  \mapsto [(X_1,\dots,X_n,ix_1\mathrm{Id},\dots ix_m \mathrm{Id})\wedge \psi_{n,m}\circ (f\otimes j_0)]\,,
\end{split}
\end{equation*}
where $j_0\co \C\to \mathrm{Sym}(\C^m)$ is the inclusion of the $\C\cdot 1$ summand. It is a simple diagram chase to check that the isomorphism of Theorem \ref{thm:unstablerankfiltrationku},
\[
R^s(ku_n)/R^{s-1}(ku_n)\cong \tilde{C}(s)_n\,,
\]
defines an isomorphism of symmetric spectra $R^s(ku)/R^{s-1}(ku)\cong \tilde{C}(s)$. We leave this verification to the reader.

Let $\mathfrak{z}\subseteq \mathfrak{u}_s$ be the centre. The isomorphism of Lie algebras $\mathfrak{u}_s \cong \mathfrak{su}_s\oplus \mathfrak{z}$, given by
\[
X \mapsto \left(X-\frac{\mathrm{tr}(X)}{s} \mathrm{Id},\frac{\mathrm{tr}(X)}{s} \mathrm{Id}\right)\,,
\]
induces a $(U(s)\times \Sigma_n)$-equivariant homeomorphism
\[
C_n(\mathfrak{u}_s)^+\cong C_n(\mathfrak{su}_s)^+\wedge S^n\, ,
\]
where $\Sigma_n$ acts on $C_n(\mathfrak{su}_s)^+$ by permutation of the components of an $n$-tuple, on $S^n$ through the standard representation, and diagonally on the smash product. This homeomorphism fits into the following commutative diagram of $(U(s)\times \Sigma_n\times \Sigma_m)$-equivariant maps
\[
\xymatrix{
C_n(\mathfrak{u}_s)^+\wedge S^m\ar[r] \ar[d]^-\cong & C_{n+m}(\mathfrak{u_s})^+ \ar[d]^-\cong  \\
C_n(\mathfrak{su}_s)^+\wedge S^n\wedge S^m \ar[r] & C_{n+m}(\mathfrak{su}_s)^+\wedge S^{n+m}
}
\]
where the top horizontal map is given by
\[
(X_1,\dots,X_n)\wedge x_1\wedge \dots \wedge x_m \mapsto (X_1,\dots,X_n,ix_1\mathrm{Id},\dots,ix_m\mathrm{Id})
\]
and the bottom horizontal map is the smash product of $e_{n,m}$ and the canonical homeomorphism $S^n\wedge S^m\cong S^{n+m}$. This shows that there is an isomorphism of symmetric spectra $\tilde{C}(s)\cong C(s)$.
\end{proof}

\begin{remark}
The multiplication maps which make $ku$ a commutative ring spectrum do not restrict to $R^s(ku)$, but they do induce commutative pairings
\[
R^s(ku)\wedge R^t(ku)\to R^{st}(ku)
\]
between the various subspectra of $ku$. Under the isomorphism of Proposition \ref{prop:stablequotient} this gives commutative pairings
\[
C(s)\wedge C(t)\to C(st)
\]
for all $s,t\in \N$. Explicitly, the $(\Sigma_n\times \Sigma_m)$-equivariant component map
\[
C(s)_n\wedge C(t)_m\to C(st)_{n+m}
\]
is given by
\begin{equation*}
\begin{split}
& [(X_1,\dots,X_n)\wedge x\wedge f]\wedge [(Y_1,\dots,Y_m)\wedge y\wedge g] \\& \quad \mapsto [(X_1\otimes \mathrm{Id}_t,\dots,X_n\otimes \mathrm{Id}_t,\mathrm{Id}_s\otimes Y_1,\dots,\mathrm{Id}_s\otimes Y_m)\wedge x\wedge y\wedge \psi_{n,m}\circ (f\otimes g)]\, .
\end{split}
\end{equation*}
\end{remark}

\section{The homotopy type of the subquotients} \label{sec:homotopytype}

We will now identify the spectrum $C(s)$ in the stable homotopy category. To this end, recall that
\[
e_{n,1}\co C_n(\mathfrak{su}_s)^+\to C_{n+1}(\mathfrak{su}_s)^+
\]
is the based map sending $(X_1,\dots,X_n)$ to $(X_1,\dots,X_n,0)$.

\begin{lemma} \label{lem:connectivity}
For all $n\in \N_{>0}$, the map
\begin{equation*}
\begin{split}
e_{n,1}\wedge \psi_{n,1}\circ (id\otimes j_0) \co & C_n(\mathfrak{su}_s)^+\wedge_{U(s)} L(\C^s,\mathrm{Sym}(\C^n))_+\\&\quad \to C_{n+1}(\mathfrak{su}_s)^+\wedge_{U(s)} L(\C^s,\mathrm{Sym}(\C^{n+1}))_+
\end{split}
\end{equation*}
is $(n-1)$-connected.
\end{lemma}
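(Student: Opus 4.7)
The plan is to reduce the connectivity assertion on the half-smash map to a connectivity statement for $e_{n,1}\co C_n(\mathfrak{su}_s)^+\to C_{n+1}(\mathfrak{su}_s)^+$ itself, and then to analyse this underlying map by computing its homotopy cofibre explicitly via the scaling action on the last coordinate.

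First I observe that, for every $m\ge 1$, the Stiefel manifold $L(\C^s,\mathrm{Sym}(\C^m))$ is contractible (as a colimit of finite Stiefel manifolds in a countably infinite-dimensional complex inner product space) and $U(s)$-free, hence a model for $EU(s)$. The $U(s)$-equivariant map $\psi_{n,1}\circ(\mathrm{id}\otimes j_0)$ between the cases $m=n$ and $m=n+1$ is therefore a $U(s)$-homotopy equivalence, and the half-smash map in the lemma is homotopy equivalent to the pointed-homotopy-orbit map $(e_{n,1})_{hU(s)}$. For each $U(s)$-equivariant cell $U(s)/H\times D^k$ in a $U(s)$-CW model of the cofibre of $e_{n,1}$, the corresponding contribution to the cofibre of $(e_{n,1})_{hU(s)}$ is of the form $BH_+\wedge D^k$ with non-basepoint cells in dimensions $\ge k$; consequently pointed homotopy orbits preserve connectivity of maps, and it suffices to prove that $e_{n,1}$ itself is $(n-1)$-connected.

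The map $e_{n,1}$ is the one-point compactification of the closed embedding $C_n(\mathfrak{su}_s)\times\{0\}\hookrightarrow C_{n+1}(\mathfrak{su}_s)$ of locally compact Hausdorff spaces, so its cofibre is the one-point compactification of the open complement $\{(X_1,\dots,X_{n+1})\in C_{n+1}(\mathfrak{su}_s)\mid X_{n+1}\ne 0\}$. Scaling the last coordinate by $\R_{>0}$ preserves all commutation relations and acts freely on this complement with slice
\[
S_{n+1}:=\{(X_1,\dots,X_{n+1})\in C_{n+1}(\mathfrak{su}_s)\mid\norm{X_{n+1}}=1\}\,,
\]
yielding a homeomorphism of the complement with $S_{n+1}\times\R$. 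Consequently the cofibre of $e_{n,1}$ is $\Sigma S_{n+1}^+$, and the claim reduces to showing that $S_{n+1}^+$ is $(n-2)$-connected.

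For this last step, I would use the projection $p\co S_{n+1}\to S(\mathfrak{su}_s)$ sending $(X_1,\dots,X_{n+1})\mapsto X_{n+1}$, and stratify the base by the conjugacy type of $Y=X_{n+1}$. Over the open dense stratum $U\subset S(\mathfrak{su}_s)$ of $Y$'s with pairwise distinct eigenvalues, the fibre of $p$ is the Cartan $\mathfrak{t}_Y^n$, so $p|_U$ is a real vector bundle of rank $n(s-1)$; its relative one-point compactification is a Thom space of connectivity at least $n(s-1)-1\ge n-1$ (for $s\ge 2$). Over each deeper stratum, indexed by a partition $\lambda\vdash s$, the centralizer $\mathfrak{z}_Y$ is strictly larger, and the fibres of $p$ are higher-dimensional. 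Combining these estimates via the iterated cofibre sequences coming from the closed stratification of $S_{n+1}^+$ should give the required connectivity bound. The main obstacle is precisely this deeper-stratum analysis: because each non-generic fibre is the commuting variety of a proper subalgebra $(\mathfrak{u}(k_1)\oplus\dots\oplus\mathfrak{u}(k_r))\cap\mathfrak{su}_s$ and is singular (a cone) at the origin, a direct Thom-space estimate is unavailable, and one must instead verify---via a dimension count on each stratum together with the $\R_{>0}$-scaling on its fibres, and by comparison with the scalar embedding $\R^n\times S(\mathfrak{su}_s)\hookrightarrow S_{n+1}$, $(t_1,\dots,t_n,Y)\mapsto(t_1Y,\dots,t_nY,Y)$---that the cone singularities contribute no cells of dimension below $n$ to $S_{n+1}^+$.
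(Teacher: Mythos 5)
Your proposal takes a genuinely different and much more geometric route than the paper, but it stops short of a complete argument at precisely the point you yourself flag as the main obstacle.

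The paper's proof avoids any direct analysis of $e_{n,1}$ or the geometry of $C_n(\mathfrak{su}_s)$. It observes that the spaces $A_n = C_n(\mathfrak{su}_s)^+\wedge_{U(s)}L(\C^s,\mathrm{Sym}(\C^n))_+$ are simply connected for $s\geq 2$ (quoting \cite[Prop.~A.6]{AGG}), so by relative Hurewicz one only needs $H_i(A_{n+1},A_n)=0$ for $i\leq n-1$. It then uses that $R^s(ku)$ comes from a special $\Gamma$-space, hence is a connective $\Omega$-spectrum above level $0$, so $R^s(ku_n)$ is $(n-1)$-connected; Freudenthal gives that the structure maps $R^s(ku_n)\wedge S^1\to R^s(ku_{n+1})$ are $2n$-connected; and the five lemma applied to the cofibre sequences defining $C(s)_n\wedge S^1\to C(s)_{n+1}$, followed by the suspension isomorphism, yields the desired vanishing. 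In short, the paper leverages structural facts about $\Gamma$-spaces and connective spectra rather than the concrete geometry.

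Your reduction from the half-smash map to $e_{n,1}$ itself is sound: $L(\C^s,\mathrm{Sym}(\C^m))$ is a model for $EU(s)$, the comparison map is $U(s)$-equivariant, and since $U(s)$ is connected the homotopy-orbit spectral sequence shows that a $k$-connected cofibre stays $k$-connected after applying $(-)\wedge_{U(s)}EU(s)_+$. (You should add the simple connectedness of $C_n(\mathfrak{su}_s)^+$ for $s\geq 2$ to pass from connectivity of the cofibre back to connectivity of the map; the $s=1$ case is trivial.) The identification of $\mathrm{Cof}(e_{n,1})$ with $\Sigma S_{n+1}^+$ via scaling the last coordinate is also correct. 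However, you are attempting to prove the a priori \emph{stronger} statement that $e_{n,1}$ alone, without the Borel construction, is $(n-1)$-connected, and the argument that $S_{n+1}^+$ is $(n-2)$-connected is left as a sketch. The generic-stratum Thom-space estimate is fine, but as you say, the deeper strata have singular, cone-like fibres for which the Thom isomorphism is unavailable, and the dimension-count-plus-scaling comparison you propose is not carried out and is not obviously workable. Until this is resolved, the proof has a genuine gap at its central step; the paper's use of the $\Gamma$-space/Freudenthal machinery is precisely what lets it sidestep this hard geometric analysis.
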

\begin{proof}
If $s=1$, then $C_n(\mathfrak{su}_s)^+=S^0$ and the map in question is a homotopy equivalence. Thus, we may assume $s\geq 2$. To simplify the notation, let us write
\[
A_n=C_n(\mathfrak{su}_s)^+\wedge_{U(s)} L(\C^s,\mathrm{Sym}(\C^n))_+\, .
\]
Since $s\geq 2$, $C_n(\mathfrak{su}_s)^+$ is simply-connected by \cite[Proposition A.6]{AGG}, and hence $A_n$ is simply-connected as well. By the relative Hurewicz theorem, it suffices to show that $H_i(A_{n+1},A_n;\Z)=0$ for all $i\leq n-1$.

The spectrum $R^s(ku)$ arises from a special $\Gamma$-space and is therefore an $\Omega$-spectrum above level $0$, and connective. In particular, $R^s(ku_n)$ is $(n-1)$-connected for all $n$. By Freudenthal's suspension theorem, the structure maps $R^s(ku_n)\wedge S^1\to R^s(ku_{n+1})$ are then $2n$-connected. The structure map
\[
C(s)_n \wedge S^1=\Sigma^{n+1} A_n  \xrightarrow{\Sigma^{n+1}(e_{n,1}\wedge \psi_{n,1}\circ (id\otimes j_0))} C(s)_{n+1}=\Sigma^{n+1} A_{n+1}
\]
(see Construction \ref{cons:quotientspectrum}) arises by taking cofibres. By the five lemma we have
\[
H_i(\Sigma^{n+1}A_{n+1},\Sigma^{n+1}A_n;\Z)=0
\]
for all $i\leq 2n$. The suspension isomorphism implies that $H_i(A_{n+1},A_n;\Z)=0$ for all $i\leq n-1$, as desired.
\end{proof}

By taking the colimit over the maps $e_{n,1}\co C_n(\mathfrak{su}_s)^+\to C_{n+1}(\mathfrak{su}_s)^+$ we define the space
\[
C_\infty(\mathfrak{su}_s)^+=\mathrm{colim}_{n\in \N}\, C_n(\mathfrak{su}_s)^+\,.
\]
We note that classical equivariant triangulation theorems can be used to show that for every $n\in \N$ the space $C_n(\mathfrak{su}_s)^+$ can be given a $U(s)$-CW-structure in such a way that $C_{n}(\mathfrak{su}_s)^+$ is a $U(s)$-subcomplex of $C_{n+1}(\mathfrak{su}_s)^+$ (see \cite[Section 3.4]{AGG} for similar statements). In particular, the colimit above is also a homotopy colimit.

The following proposition along with Proposition \ref{prop:stablequotient} proves Theorem \ref{thm:intro2} from the introduction.

\begin{proposition} \label{prop:homotopytype}
There is an isomorphism in the stable homotopy category
\[
C(s) \cong  \Sigma^\infty\left( C_\infty(\mathfrak{su}_s)^+\wedge_{U(s)} EU(s)_+\right)\, .
\]
\end{proposition}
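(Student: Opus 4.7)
The plan is to recognize $C(s)$ as a telescope-like symmetric spectrum whose stable homotopy type is determined by a colimit of underlying spaces. Set $A_n = C_n(\mathfrak{su}_s)^+\wedge_{U(s)} L(\C^s,\mathrm{Sym}(\C^n))_+$, so Construction \ref{cons:quotientspectrum} gives $C(s)_n = \Sigma^n A_n$, and the structure map $C(s)_n \wedge S^1 \to C(s)_{n+1}$ is $\Sigma^{n+1}$ of the natural map $\iota_n\co A_n \to A_{n+1}$ induced by $e_{n,1}$ on the commuting tuple and by $\psi_{n,1}\circ(\mathrm{id}\otimes j_0)$ on the Stiefel factor. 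Write $A_\infty = \mathrm{colim}_n A_n$.

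The first step is to identify $A_\infty$ with the target. The commuting variety factors assemble by definition into $C_\infty(\mathfrak{su}_s)^+$, while $\mathrm{Sym}(\C^\infty) = \mathrm{colim}_n \mathrm{Sym}(\C^n)$ is a countably infinite dimensional complex inner product space, so $L(\C^s,\mathrm{Sym}(\C^\infty))$ is a free contractible $U(s)$-space and hence a model for $EU(s)$. Since the relevant half-smash product commutes with filtered colimits, one obtains a homeomorphism
\[
A_\infty \cong C_\infty(\mathfrak{su}_s)^+\wedge_{U(s)}L(\C^s,\mathrm{Sym}(\C^\infty))_+\,,
\]
which is homotopy equivalent to $C_\infty(\mathfrak{su}_s)^+\wedge_{U(s)}EU(s)_+$.

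Next, the inclusions $A_n\to A_\infty$ give levelwise maps $C(s)_n = \Sigma^n A_n \to \Sigma^n A_\infty$ commuting with structure maps, and thus a morphism of underlying sequential spectra $C(s)\to \Sigma^\infty A_\infty$. By iterating Lemma \ref{lem:connectivity} the map $A_n\to A_\infty$ is $(n-1)$-connected, so its $n$-fold suspension is $(2n-1)$-connected; for every $k$ and sufficiently large $n$ the induced map $\pi_{k+n}(\Sigma^n A_n)\to\pi_{k+n}(\Sigma^n A_\infty)$ is therefore an isomorphism. Passing to colimits in $n$ shows $C(s)\to\Sigma^\infty A_\infty$ is a $\pi_\ast^s$-isomorphism.

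The main obstacle I anticipate is upgrading this from a stable equivalence of underlying sequential spectra to an isomorphism in the stable homotopy category of symmetric spectra, since the $\Sigma_n$-actions on the $A_n$ do not assemble into a compatible $\Sigma_n$-action on $A_\infty$. This can be handled by observing that both sides are semistable---the source because its structure maps become highly connected under suspension by Freudenthal and the target because $\Sigma^\infty$ of a based space is automatically semistable---so that the sequential $\pi_\ast$-isomorphism suffices to give the desired isomorphism in the stable homotopy category.
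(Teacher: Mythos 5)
Your proposal is correct and takes essentially the same route as the paper: both pass to the underlying sequential spectrum via semistability, identify the level spaces up to the colimit $C_\infty(\mathfrak{su}_s)^+$, and use Lemma \ref{lem:connectivity} to show the resulting map of pre-spectra is a $\pi_*$-isomorphism. The one place you deviate slightly is in justifying that $C(s)$ is semistable: the paper invokes the fact that $R^{s-1}(ku)$ and $R^s(ku)$ underlie orthogonal spectra (so they, and hence the cofibre $C(s)$, are semistable), whereas you sketch a connectivity criterion via Freudenthal. Your route is workable --- eventual stabilization of the naive homotopy groups is indeed a standard sufficient condition for semistability --- but it needs to be cited or spelled out more carefully than ``structure maps become highly connected,'' since semistability is a statement about the $M$-action on naive homotopy, not merely about connectivity of the spaces; the paper's appeal to the orthogonal-spectrum structure is cleaner. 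Your observation that the levelwise map $\Sigma^n A_n\to\Sigma^n A_\infty$ fails to be $\Sigma_n$-equivariant is a real point and is exactly the reason the semistability reduction is needed; the paper handles it in the same way by working with the underlying pre-spectrum. Finally, your identification $A_\infty\cong C_\infty(\mathfrak{su}_s)^+\wedge_{U(s)}L(\C^s,\mathrm{Sym}(\C^\infty))_+$ via the cofinality of the diagonal in $\N\times\N$ is a small variant of the paper's device of first replacing the varying universe $\mathrm{Sym}(\C^n)$ by the fixed universe $\mathrm{Sym}(\C)$; both rest on the observation that $L(\C^s,-)$ of any countably infinite-dimensional inner product space is a model for $EU(s)$.
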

\begin{proof}
The spectra $R^{s-1}(ku)$ and $R^s(ku)$ are semi-stable (since they come from orthogonal spectra), hence so is their cofibre $C(s)$. Since $C(s)$ is semi-stable, its homotopy type is determined by its underlying pre-spectrum. It is clear that as a pre-spectrum $C(s)$ is level-equivalent to the spectrum with $n$-th space ($n\geq 1$)
\[
X(s)_n=(C_n(\mathfrak{su}_s)^+\wedge S^n)\wedge_{U(s)} L(\C^s,\mathrm{Sym}(\C))_+
\]
and structure maps given by the smash product of $e_{n,1}$ and the homeomorphism $S^n\wedge S^1\cong S^{n+1}$. In the spectrum $X(s)$ the universe $\mathrm{Sym}(\C)$ no longer changes with the levels, so we write $L(\C^s,\mathrm{Sym}(\C))=EU(s)$. The $0$-th space is $X(s)_0=S^0$.

Define the map of pre-spectra
\[
X(s) \to  \Sigma^\infty\left( C_\infty(\mathfrak{su}_s)^+ \wedge_{U(s)} EU(s)_+ \right)
\]
to be the identity in level $0$ and the map
\[
\Sigma^n (C_n(\mathfrak{su}_s)^+\wedge_{U(s)}EU(s)_+)\to \Sigma^n(C_\infty(\mathfrak{su}_s)^+ \wedge_{U(s)} EU(s)_+)
\]
induced by the inclusion $C_n(\mathfrak{su}_s)^+\to C_\infty(\mathfrak{su}_s)^+=\mathrm{colim}_{n\in \N}\, C_n(\mathfrak{su}_s)^+$ in all levels $n\geq 1$. This map of pre-spectra induces an isomorphism on all homotopy groups by Lemma \ref{lem:connectivity}. It is therefore an isomorphism in the stable homotopy category.
\end{proof}

\begin{example}[Relationship with the complete unordered flag manifold] \label{ex:flag}
When $s=1$, then $C_\infty(\mathfrak{su}_s)^+=S^0$, so
\[
C(1)\cong \Sigma^\infty \left(BU(1)_+\right)\, .
\]
Suppose now that $s=p$ is a prime (even or odd). Let $U(1)^p\subseteq U(p)$ be a maximal torus and let $H=U(1)^p\cap SU(p)$ be a maximal torus for $SU(p)$. Let $\mathfrak{h}$ be the Lie algebra of $H$. As a $\Sigma_p$-representation, $\mathfrak{h}$ is the $(p-1)$-dimensional reduced standard representation. By fixing an Ad-invariant norm $\norm{\cdot}$ on $\mathfrak{su}_p$, we define the space of commuting $n$-tuples of unit norm:\footnote{This space would correspond to $\overline{\mathcal{L}_{|\cdot|=1}(s,S^n)}$ in \cite[Section 3.1]{Hausmann}.}
\[
C_n^1(\mathfrak{su}_p)=\{(X_1,\dots,X_n)\in C_n(\mathfrak{su}_p)\mid \textstyle{\sum_{i=1}^n \norm{X_i}^2=1}\}\, .
\]
Let $S(\mathfrak{h}^n)\subseteq \mathfrak{h}^n$ be the unit sphere and consider the $U(p)$-equivariant map
\[
\phi_n \co U(p)/U(1)^p \times_{\Sigma_p} S(\mathfrak{h}^n) \to C_n^1(\mathfrak{su}_p)
\]
defined by
\[
[(gU(1)^p,(X_1,\dots,X_n))] \mapsto (\mathrm{Ad}_g(X_1),\dots,\mathrm{Ad}_g(X_n))\,,
\]
where $\mathrm{Ad}_g\co \mathfrak{su}_p\to \mathfrak{su}_p$ is the adjoint action. By \cite[Lemma A.13]{AGG}, the map $\phi_n$ is a mod-$p$ homology isomorphism for all $n$ (if $p=2$, then $\phi_n$ is even a homeomorphism). Note that $U(p)/U(1)^p\times_{\Sigma_p} S(\mathfrak{h}^n)$ is the total space of a sphere bundle over the manifold of complete unordered flags in $\C^p$, i.e., over
\[
\overline{\mathrm{Fl}}(\C^p):=(U(p)/(\Sigma_p \wr U(1))\, .
\]
Let $C^1_n(\mathfrak{su}_p)^\lozenge$ denote the unreduced suspension of $C^1_n(\mathfrak{su}_p)$ with basepoint the cone point at $1$. Then $C^1_n(\mathfrak{su}_p)^\lozenge\cong C_n(\mathfrak{su}_p)^+$ as $U(p)$-spaces (see \cite[Lemma A.5]{AGG}), so that in the colimit $n\to \infty$ there is a mod-$p$ homology isomorphism
\[
\overline{\mathrm{Fl}}(\C^p)^\lozenge \wedge_{U(p)} EU(p)_+\to C_\infty(\mathfrak{su}_p)^+\wedge_{U(p)} EU(p)_+\, .
\]
Hence, there is an equivalence mod-$p$
\[
R^p(ku)/R^{p-1}(ku)=C(p)\simeq_{p} \Sigma^\infty \left(\overline{\mathrm{Fl}}(\C^p)^\lozenge \wedge_{U(p)} EU(p)_+\right)\, .
\]
\end{example}

We will now show how Theorem \ref{thm:intro2} reduces to a well-known description of the subquotients of the stable rank filtration of $ku$. Let $\mathcal{C}$ be a collection of subgroups of a compact Lie group $G$, i.e., a set of subgroups that is closed under conjugation. Recall that a $G$-space $X$ is a \emph{universal space} for the collection $\mathcal{C}$ if
\begin{itemize}
\item the isotropy groups of the $G$-action on $X$ belong to $\mathcal{C}$
\item for each $K\in \mathcal{C}$ the fixed point space $X^K$ is contractible.
\end{itemize}
A universal space is determined up to $G$-equivalence by these two properties. We write $X=E\mathcal{C}$ if $X$ is universal for $\mathcal{C}$.

Fix $s\in \N_{>0}$ and let $\mathcal{L}_s$ be the collection of \emph{complete} subgroups of $U(s)$, i.e., the subgroups which are conjugate to one of the form
\[
U(n_1)\times \cdots \times U(n_k)
\]
with $k>1$ and $\sum_i n_i=s$. See \cite[Section 9]{ALfiltered} for more details.

\begin{proposition}[{Cf. \cite[Prop. 3.6]{Hausmann}}] \label{prop:universal1}
The $U(s)$-space $C^1_\infty(\mathfrak{su}_s)$ is universal for the collection $\mathcal{L}_s$.
\end{proposition}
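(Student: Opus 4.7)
The plan is to check the two defining properties of a universal $\mathcal{L}_s$-space directly: every isotropy group of the $U(s)$-action on $C^1_\infty(\mathfrak{su}_s)$ lies in $\mathcal{L}_s$, and for every $K \in \mathcal{L}_s$ the fixed-point space is contractible.

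For the isotropy step, I would take a point $(X_1,\dots,X_n) \in C^1_n(\mathfrak{su}_s)$ and simultaneously diagonalize the commuting skew-Hermitian matrices $X_i$ by a single unitary change of basis. This yields an orthogonal decomposition $\C^s = W_1 \oplus \cdots \oplus W_k$ into common eigenspaces of dimensions $n_1,\dots,n_k$, and the simultaneous centralizer in $U(s)$ is precisely $U(n_1) \times \cdots \times U(n_k)$. The key observation is that $k \geq 2$: the unit norm condition forces some $X_i$ to be nonzero, and a nonzero traceless skew-Hermitian matrix cannot be scalar, so it must have at least two distinct eigenvalues. Hence the isotropy group is a complete subgroup.

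For the fixed-point step, I would fix $K = U(n_1) \times \cdots \times U(n_k) \in \mathcal{L}_s$ with $k \geq 2$ and first compute $(\mathfrak{su}_s)^K$. By Schur's lemma, any matrix commuting with every element of $K$ is block-diagonal with scalar blocks, so imposing tracelessness gives
\[
V := (\mathfrak{su}_s)^K = \bigl\{\mathrm{diag}(\lambda_1 i I_{n_1},\dots,\lambda_k i I_{n_k}) : \lambda_j \in \R,\ \textstyle\sum_j n_j \lambda_j = 0\bigr\},
\]
a real vector space of dimension $k-1 \geq 1$. Since all elements of $V$ are simultaneously diagonal in the block basis, they pairwise commute, so the commutativity constraint in $C_n(\mathfrak{su}_s)$ is vacuous on $V^n$. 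Therefore $C_n(\mathfrak{su}_s)^K = V^n$ and $C^1_n(\mathfrak{su}_s)^K$ is the unit sphere $S(V^n) \cong S^{n(k-1)-1}$.

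Finally, the stabilization map $(X_1,\dots,X_n) \mapsto (X_1,\dots,X_n,0)$ preserves the norm and the $K$-fixed condition, and restricts to the standard equatorial inclusion $S(V^n) \hookrightarrow S(V^{n+1})$. Taking the colimit (which commutes with the fixed-point functor for this sequence of closed $U(s)$-CW subcomplex inclusions, as invoked just before Proposition \ref{prop:homotopytype}) and using $k-1 \geq 1$ identifies $C^1_\infty(\mathfrak{su}_s)^K$ with $S^\infty$, which is contractible. The only nontrivial bookkeeping is verifying that $(-)^K$ commutes with the colimit, for which I would cite the equivariant triangulation argument already used in the paper; the rest is elementary linear algebra, so no serious obstacle is expected.
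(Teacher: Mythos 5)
Your proof is correct and follows essentially the same line as the paper's: verify the two defining properties of a universal $\mathcal{L}_s$-space directly, using simultaneous diagonalization to identify isotropy groups with products $U(n_1)\times\cdots\times U(n_k)$ (with $k\geq 2$ forced by tracelessness plus unit norm), and computing fixed points via Schur's lemma to exhibit $C^1_\infty(\mathfrak{su}_s)^K$ as an $S^\infty$. Your identification of $C_n^1(\mathfrak{su}_s)^K$ explicitly as $S(V^n)\cong S^{n(k-1)-1}$ with $V=(\mathfrak{su}_s)^K$ is a slightly cleaner packaging of what the paper describes as the intersection of $S(\mathfrak{h}^n)$ with hyperplanes, and your remark that fixed points commute with the sequential colimit of equivariant closed inclusions is a point the paper glosses over but which is indeed justified by the $U(s)$-CW structure invoked there.
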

\begin{proof}
We first show that the isotropy groups of $C_\infty^1(\mathfrak{su}_s)$ belong to $\mathcal{L}_s$. An element of $C_\infty^1(\mathfrak{su}_s)$ is represented by a tuple $(X_1,\dots,X_n)\in C_n^1(\mathfrak{su}_s)$ for some $n\geq 1$.  Since the $X_i$'s commute pairwise, they give a simultaneous eigenspace decomposition of $\C^s$. This is the unique coarsest orthogonal decomposition $\C^s=\bigoplus_{j=1}^k V_j$ such that $X_i$ acts on $V_j$ by a scalar $x_{ij}\in i\R$ for all $i,j$. Elementary linear algebra shows that the isotropy group of $(X_1,\dots,X_n)$ is precisely the subgroup of $U(s)$ which leaves invariant the subspaces $V_1,\dots,V_k$. But this subgroup is conjugate to $U(n_1)\times \cdots \times U(n_k)$ for $n_j=\dim_{\C}(V_j)$. Note that $k>1$, because otherwise the matrices $X_i$ being traceless and diagonal would have to be zero for all $i$. This would contradict the fact that $(X_1,\dots,X_n)$ has unit norm.

To see that for each $K\in \mathcal{L}_s$ the space $C_\infty^1(\mathfrak{su}_s)^K$ is contractible, we may assume that $K=U(n_1)\times \cdots \times U(n_k)$. Let $\mathfrak{h}\subseteq \mathfrak{su}_s$ be the Lie subalgebra consisting of the diagonal matrices in $\mathfrak{su}_s$ only. If $(X_1,\dots,X_n)\in C^1_n(\mathfrak{su}_s)$ is fixed by $K$, then each $X_i$ must be a traceless block diagonal matrix with blocks of the form $x_{ij} \mathrm{Id}_{n_j}$, $j=1,\dots,k$, $x_{ij}\in i\R$. In particular, $(X_1,\dots,X_n)\in S(\mathfrak{h}^n)$. This shows that $C_n^1(\mathfrak{su}_s)^K$ is the intersection of $S(\mathfrak{h}^n)$ with finitely many hyperplanes in $\mathfrak{h}^n$ enforcing the vanishing of the trace as well as the equality of the first $n_1$ entries of $X_i$, the next $n_2$ entries of $X_i$ and so on -  for all $i$. As $n$ tends to infinity, this intersection is a sphere $S^\infty$, hence contractible.
\end{proof}

The proposition shows that $C_\infty^1(\mathfrak{su}_s)=E\mathcal{L}_s$, and hence that $C_\infty(\mathfrak{su}_s)^+=E\mathcal{L}_s^\lozenge$.

The collection $\mathcal{L}_s$ can also be viewed as a topological poset ordered by inclusion of subgroups. The geometric realisation $|\mathcal{L}_s|$ is the complex of proper direct sum decompositions of $\C^s$ introduced by Arone \cite{AroneTop}. There is a $U(s)$-equivariant map $E\mathcal{L}_s\to |\mathcal{L}_s|$ which is a homotopy equivalence (see \cite[§2.2]{AL} and the references therein). As a result, Theorem \ref{thm:intro2} becomes the following theorem of Arone and Lesh  (see \cite[§2.2]{AL}).

\begin{corollary}[\cite{AL}]
There are isomorphisms in the stable homotopy category
\begin{equation*}
\begin{split}
R^s(ku)/R^{s-1}(ku)& \cong \Sigma^\infty \left(E\mathcal{L}_s^\lozenge \wedge_{U(s)} EU(s)_+\right) \\& \cong \Sigma^\infty\left(|\mathcal{L}_s|^\lozenge \wedge_{U(s)} EU(s)_+\right)\, .
\end{split}
\end{equation*}
\end{corollary}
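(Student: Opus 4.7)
The plan is to simply assemble the pieces already proved in the paper. Propositions \ref{prop:stablequotient} and \ref{prop:homotopytype} together give an isomorphism in the stable homotopy category
\[
R^s(ku)/R^{s-1}(ku) \;\cong\; \Sigma^\infty\bigl(C_\infty(\mathfrak{su}_s)^+ \wedge_{U(s)} EU(s)_+\bigr)\,,
\]
so it remains to identify the $U(s)$-homotopy type of $C_\infty(\mathfrak{su}_s)^+$.

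First I would invoke the homeomorphism $C_n(\mathfrak{su}_s)^+ \cong C_n^1(\mathfrak{su}_s)^\lozenge$ of $U(s)$-spaces noted in Example \ref{ex:flag} (via \cite[Lemma A.5]{AGG}). Since unreduced suspension commutes with colimits along closed inclusions that send the cone points compatibly, passing to the colimit over $n$ gives a $U(s)$-homeomorphism $C_\infty(\mathfrak{su}_s)^+ \cong C_\infty^1(\mathfrak{su}_s)^\lozenge$. Proposition \ref{prop:universal1} then identifies $C_\infty^1(\mathfrak{su}_s)$ with the universal space $E\mathcal{L}_s$, yielding the first asserted isomorphism
\[
R^s(ku)/R^{s-1}(ku) \;\cong\; \Sigma^\infty\bigl(E\mathcal{L}_s^\lozenge \wedge_{U(s)} EU(s)_+\bigr)\,.
\]

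For the second isomorphism I would use the $U(s)$-equivariant homotopy equivalence $E\mathcal{L}_s \to |\mathcal{L}_s|$ recalled from \cite[§2.2]{AL}. Taking unreduced suspensions (which preserves $U(s)$-equivariant homotopy equivalences of well-pointed $U(s)$-spaces) gives an equivariant equivalence $E\mathcal{L}_s^\lozenge \to |\mathcal{L}_s|^\lozenge$. Smashing with the free $U(s)$-CW complex $EU(s)_+$ and passing to orbits preserves this equivalence, so taking suspension spectra yields the second isomorphism in the stable homotopy category.

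There is no real obstacle here; the statement is essentially a repackaging of earlier results. The only mild subtlety is checking that the colimit of the $C_n(\mathfrak{su}_s)^+$ commutes with the unreduced suspension construction and that the orbit-smash construction preserves $U(s)$-equivalences, both of which follow from the $U(s)$-CW structures on the spaces involved (as noted before Proposition \ref{prop:homotopytype}).
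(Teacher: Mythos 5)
Your proof is correct and follows essentially the same route as the paper: you combine Propositions \ref{prop:stablequotient} and \ref{prop:homotopytype} to get $R^s(ku)/R^{s-1}(ku)\cong \Sigma^\infty(C_\infty(\mathfrak{su}_s)^+\wedge_{U(s)}EU(s)_+)$, then use $C_n(\mathfrak{su}_s)^+\cong C^1_n(\mathfrak{su}_s)^\lozenge$ together with Proposition \ref{prop:universal1} to identify $C_\infty(\mathfrak{su}_s)^+$ with $E\mathcal{L}_s^\lozenge$, and finally invoke the cited $U(s)$-equivalence $E\mathcal{L}_s\to |\mathcal{L}_s|$. This is exactly the argument given in the paragraph preceding the corollary, just with the colimit and orbit-smash steps spelled out a bit more explicitly.
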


As a by-product, we remark that the mod-$p$ homology equivalence
\[
\overline{\mathrm{Fl}}(\C^p)\simeq_p C^1_\infty(\mathfrak{su}_p) \simeq |\mathcal{L}_p|
\]
(see Example \ref{ex:flag}) reduces the computation of $H^\ast(\overline{\mathrm{Fl}}(\C^p);\F_p)$ undertaken in \cite{JanaGuerra} to the calculation of $H^\ast(|\mathcal{L}_p|;\F_p)$. The latter is known thanks to the work of Arone.

\begin{corollary} \label{cor:flag} Let $p$ be an odd prime and let $\overline{\mathrm{Fl}}(\C^p)$ be the manifold of complete unordered flags in $\C^p$. Let $\mathcal{A}(0)=\Lambda(\beta)$ be the subalgebra of the mod-$p$ Steenrod algebra generated by the Bockstein homomorphism. Then there is an isomorphism of $\mathcal{A}(0)$-modules
\[
\tilde{H}^\ast(\overline{\mathrm{Fl}}(\C^p);\F_p) \cong \Sigma^{2p-3}(\mathcal{A}(0)\otimes \Lambda(x_1,\dots,x_{p-2}))\,,
\] 
where $|x_{i}|=2i-1$ and the suspension indicates a shift so that the smallest degree with non-trivial cohomology is $2p-3$.
\end{corollary}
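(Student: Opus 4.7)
The proof proceeds by transporting the computation to a more tractable space via the mod-$p$ equivalence $\overline{\mathrm{Fl}}(\C^p) \simeq_p |\mathcal{L}_p|$ assembled in the discussion preceding the corollary, and then invoking Arone's known computation of the mod-$p$ cohomology of the partition complex.

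First I would make the mod-$p$ equivalence explicit. Example~\ref{ex:flag} provides, for each $n$, a $U(p)$-equivariant mod-$p$ homology isomorphism $\phi_n \colon U(p)/U(1)^p \times_{\Sigma_p} S(\mathfrak{h}^n) \to C_n^1(\mathfrak{su}_p)$. As $n \to \infty$, the sphere $S(\mathfrak{h}^n)$ has increasing connectivity, so the bundle projection onto $\overline{\mathrm{Fl}}(\C^p) = U(p)/(\Sigma_p \wr U(1))$ becomes a mod-$p$ equivalence in the colimit. Combining with Proposition~\ref{prop:universal1} and the homotopy equivalence $E\mathcal{L}_p \simeq |\mathcal{L}_p|$ recalled thereafter, one obtains $\overline{\mathrm{Fl}}(\C^p) \simeq_p |\mathcal{L}_p|$. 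Passing to reduced $\F_p$-cohomology and using naturality of the Bockstein yields an isomorphism of $\mathcal{A}(0)$-modules
\[
\tilde{H}^\ast(\overline{\mathrm{Fl}}(\C^p); \F_p) \cong \tilde{H}^\ast(|\mathcal{L}_p|; \F_p).
\]

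Next I would invoke Arone's computation of the cohomology of the partition/decomposition complex from \cite{AroneTop}, according to which
\[
\tilde{H}^\ast(|\mathcal{L}_p|; \F_p) \cong \Sigma^{2p-3}\bigl(\mathcal{A}(0) \otimes \Lambda(x_1,\dots,x_{p-2})\bigr)
\]
with $|x_i| = 2i-1$, as $\mathcal{A}(0)$-modules. Splicing this into the previous isomorphism gives the corollary.

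The main bookkeeping point is verifying that Arone's description is available in precisely the form stated: namely, as a free $\mathcal{A}(0)$-module whose generators form an exterior algebra on classes in the specified odd degrees, rather than just as a graded $\F_p$-vector space of the right Poincar\'e polynomial. In particular, the connectivity shift $2p-3$ matches the bottom dimension of the partition complex for $\C^p$, and the free $\mathcal{A}(0)$-action reflects the fact that the cohomology is Bockstein-acyclic above its bottom class. If \cite{AroneTop} only isolates the underlying vector space, I would supplement the citation with the explicit Bockstein computation (e.g. from subsequent work on partition complexes) to pin down the $\mathcal{A}(0)$-module structure.
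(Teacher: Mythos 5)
Your overall strategy---reducing via the mod-$p$ equivalence $\overline{\mathrm{Fl}}(\C^p)\simeq_p |\mathcal{L}_p|$ and then reading off Arone's computation---matches the paper's, but you elide one genuine step. Arone's paper does not directly compute $\tilde{H}^\ast(|\mathcal{L}_p|;\F_p)$. What \cite[Theorem 4(b)]{AroneTop} records is the mod-$p$ cohomology of the derivative spectrum $\Sigma\,\mathrm{Map}(|\mathcal{L}_p|^\lozenge,\Sigma^\infty S^{\mathfrak{u}_p})$, a twisted Spanier--Whitehead dual of $|\mathcal{L}_p|^\lozenge$, not the cohomology of $\Sigma^\infty|\mathcal{L}_p|^\lozenge$ itself. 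To bridge these, the paper invokes the mod-$p$ self-duality theorem of Arone and Lesh, \cite[Theorem 10.1]{ALfiltered}, which supplies a mod-$p$ equivalence
\[
\Sigma^\infty |\mathcal{L}_p|^\lozenge \simeq_p \Sigma\,\mathrm{Map}(|\mathcal{L}_p|^\lozenge,\Sigma^\infty S^{\mathfrak{u}_p})\,.
\]
Without this duality your citation does not yield the asserted answer. Your closing caveat also points in the wrong direction: \cite[Theorem 4(b)]{AroneTop} already records the full $\mathcal{A}(0)$-module structure of the object it computes, so there is no issue with Bockstein bookkeeping to supplement; the missing ingredient is the Arone--Lesh duality that converts the cohomology of the mapping spectrum into that of $|\mathcal{L}_p|^\lozenge$. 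Once that step is inserted, your argument runs exactly as the paper's.
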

\begin{proof}
By \cite[Theorem 10.1]{ALfiltered}, there is a mod-$p$ equivalence
\[
\Sigma^\infty |\mathcal{L}_s|^\lozenge \simeq_p \Sigma \mathrm{Map}(|\mathcal{L}_s|^\lozenge,\Sigma^{\infty}S^{\mathfrak{u}_p})\,,
\]
and the cohomology of the right hand side is displayed in \cite[Theorem 4(b)]{AroneTop}.
\end{proof}

It follows that the mod-$p$ Poincar{\'e} polynomial of $\overline{\mathrm{Fl}}(\C^p)$ is
\[
P(t)=1+t^{2p-3}(1+t)\prod_{i=1}^{p-2}(1+t^{2i-1})\,,
\]
which agrees with \cite[Corollary 6.9]{JanaGuerra}.

\begin{remark}
There is an even more direct way of showing that $\overline{\mathrm{Fl}}(\C^p)^\lozenge \simeq_p |\mathcal{L}_p|^\lozenge$. As explained in the proof of \cite[Proposition 9.1]{AroneCW}, $|\mathcal{L}_m|^\lozenge$ can be identified with the total homotopy cofibre of a certain $(m-1)$-dimensional cubical diagram $\mathcal{X}$ indexed by subsets $U\subseteq \{2,\dots,m\}$. The values $\mathcal{X}(U)$ are disjoint unions of $U(m)$-orbits $U(m)/H$, where $H\subseteq U(m)$ is a product of iterated wreath products of symmetric groups with standard subgroups $1\times \cdots \times 1\times U(k)\times 1\times \cdots \times 1\subseteq U(m)$. If $m=p$ and $U\neq \{p\}$, the symmetric groups that appear have order prime to $p$. Consequently, for all $W\neq \emptyset$ with $p\not\in W$, the maps $\mathcal{X}(\{p\}\cup W)\to \mathcal{X}(W)$ which are shown in \cite{AroneCW} to be rational homology isomorphisms are, in fact, homology isomorphisms with coefficients in $\F_p$. It follows that the total homotopy cofibre of $\mathcal{X}$ is mod-$p$ equivalent with the homotopy cofibre of the map $\mathcal{X}(\{p\})=U(p)/(\Sigma_p\wr U(1))\to \mathcal{X}(\emptyset)=\mathrm{pt}$, i.e., with $\overline{\mathrm{Fl}}(\C^p)^\lozenge$.
\end{remark}

\section{Statements for real $K$-theory} \label{sec:realvariants}

In this section we summarise the real variants of the foregoing results. The commutative symmetric ring spectrum $ko$ is defined in the same fashion as $ku$, but using real subspaces of real inner product spaces everywhere, see \cite[Remark 6.3.12]{Global}. Let $\mathcal{U}$ be a real inner product space of finite or countably infinite dimension. Then $ko(-,\mathcal{U})$ denotes the $\Gamma$-space which assigns to $\langle k\rangle\in \Fin_\ast$ the space of $k$-tuples of mutually orthogonal finite dimensional subspaces of $\mathcal{U}$. The $n$-th space of $ko$ is defined by
\[
ko_n=ko(S^n,\mathrm{Sym}(\R^n))\,,
\]
with $\Sigma_n$-action and structure maps as in the complex case.

Let $\mathcal{U}_{\C}$ be the complexification of $\mathcal{U}$. Let $U(\mathcal{U}_\C)^{\mathrm{sym}}\subseteq U(\mathcal{U}_\C)$ denote the subspace of those unitary transformations whose eigenspaces are the complexification of real subspaces of $\mathcal{U}$. Equivalently, these are the unitary transformations which are represented by symmetric matrices with respect to an orthonormal basis of $\mathcal{U}$. Define
\[
\overline{\mathscr{C}}_n^\mathrm{sym} =\{[(A_1,\dots,A_n)]\in \overline{\mathscr{C}}_n  \mid A_1,\dots,A_n\in U(\mathrm{Sym}(\C^n))^{\mathrm{sym}} \}\, .
\]

The following is then the real analogue of Lemma \ref{lem:model}.

\begin{lemma} \label{lem:modelreal}
For all $n\in \N$, there is a $\Sigma_n$-equivariant homeomorphism
\[
ko_n\cong \overline{\mathscr{C}}_n^\mathrm{sym}  \, .
\]
\end{lemma}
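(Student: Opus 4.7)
The plan is to imitate the proof of Lemma \ref{lem:model} essentially verbatim, with the real spectral theorem replacing the complex one. The case $n=0$ is trivial, so assume $n\geq 1$. As before, identify $S^n$ with $S(\C)\wedge\cdots\wedge S(\C)$ so that $x_i\in S^n$ decomposes as $x_{i1}\wedge\cdots\wedge x_{in}$ with $x_{ij}\in S(\C)$. Define
\[
\overline{\varphi}_n^{\mathrm{sym}}\co ko_n\to \overline{\mathscr{C}}_n^{\mathrm{sym}}
\]
by sending $[(V_1,x_1),\dots,(V_k,x_k)]$, with $V_i$ mutually orthogonal real subspaces of $\mathrm{Sym}(\R^n)$, to $[(A_1,\dots,A_n)]$, where $A_j$ acts on the complexification $V_i\otimes_\R\C\subseteq \mathrm{Sym}(\C^n)$ as the scalar $x_{ij}\,\mathrm{Id}$ and as the identity on the orthogonal complement of $\bigoplus_i (V_i\otimes_\R\C)$. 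By construction every eigenspace of $A_j$ is the complexification of a real subspace of $\mathrm{Sym}(\R^n)$, so $A_j\in U(\mathrm{Sym}(\C^n))^{\mathrm{sym}}$; commutativity of the $A_j$, well-definedness on the equivalence classes defining $ko_n$ and $\overline{\mathscr{C}}_n^{\mathrm{sym}}$, and $\Sigma_n$-equivariance all carry over from the complex case without change.

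For bijectivity, the key input is the real spectral statement that a commuting tuple $(A_1,\dots,A_n)\in U(\mathcal{U}_\C)^{\mathrm{sym},\,n}$ admits a simultaneous eigenspace decomposition whose summands are complexifications of mutually orthogonal real subspaces of $\mathcal{U}$. Each $A_j$ individually has such a decomposition by the hypothesis $A_j\in U(\mathcal{U}_\C)^{\mathrm{sym}}$, and the common refinement of the individual decompositions still consists of complexifications of real subspaces because
\[
(W_1\otimes_\R\C)\cap(W_2\otimes_\R\C)=(W_1\cap W_2)\otimes_\R\C
\]
for real subspaces $W_1,W_2\subseteq\mathcal{U}$. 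Selecting the joint eigenspaces on which at least one $A_j$ acts non-trivially, together with the corresponding joint eigenvalues in $S(\C)^n$, recovers a unique equivalence class $[(V_1,x_1),\dots,(V_k,x_k)]\in ko_n$ mapping to the given class, so $\overline{\varphi}_n^{\mathrm{sym}}$ is bijective.

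For the topological part, we mimic the colimit argument of Lemma \ref{lem:model}. Writing $\overline{\mathscr{C}}_n^{\mathrm{sym}}(V)$ for the analogous space built from a finite dimensional real subspace $V\subseteq \mathrm{Sym}(\R^n)$, both sides are exhausted by the evident filtrations over such $V$, and it suffices to show that the restriction
\[
\overline{\varphi}_n^{\mathrm{sym}}|_V\co ko_n(S^n,V)\to \overline{\mathscr{C}}_n^{\mathrm{sym}}(V)
\]
is a homeomorphism. This follows by passing to quotients from the real variant of the homeomorphism $\varphi_n|_V$ in \cite[Proposition 2.4]{GH19}, whose construction uses only the spectral data and is insensitive to whether one works over $\R$ or $\C$.

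The only genuinely new ingredient beyond the complex case is the observation that commuting symmetric unitaries admit a simultaneous \emph{real} joint eigenspace decomposition, and this is the step I expect to be the main (though still very mild) obstacle; everything else is formally identical to the proof of Lemma \ref{lem:model}.
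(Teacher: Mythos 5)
Your argument is correct, and it is essentially the paper's proof written out in full detail: the paper simply identifies $ko_n$ with a subspace of $ku_n$ via complexification (citing Schwede's Remark~6.3.12) and observes that the homeomorphism of Lemma~\ref{lem:model} restricts to the required one. Your verification that commuting symmetric unitaries admit a simultaneous real joint eigenspace decomposition, via $(W_1\otimes_\R\C)\cap(W_2\otimes_\R\C)=(W_1\cap W_2)\otimes_\R\C$, is exactly the check needed to see that this restricted map lands in and surjects onto $\overline{\mathscr{C}}_n^{\mathrm{sym}}$; the paper leaves it implicit.
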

\begin{proof}
Complexification identifies $ko_n$ with a subspace of $ku_n$ (see \cite[Remark 6.3.12]{Global}). Then the homeomorphism in question is just the restriction of the homeomorphism in Lemma \ref{lem:model}.
\end{proof}

For every $n\in \N$, we define a rank filtration of $ko_n$ by letting $R^s(ko_n)\subseteq ko_n$ be the subspace of configurations $[(V_1,x_1),\dots,(V_k,x_k)]\in ko_n$ satisfying
\[
\sum_{\substack{i=1\\ x_i\neq \textnormal{ basepoint}}}^k \dim_{\R}(V_i) \leq s\, .
\]
Let $\mathfrak{sym}_s\subseteq \R^{s^2}$ be the space of $s\times s$ real symmetric matrices, and let
\[
C_n(\mathfrak{sym}_s)=\{(X_1,\dots,X_n)\in \mathfrak{sym}_s^n\mid [X_i,X_j]=0\textnormal{ for all }1\leq i,j\leq n\}
\]
be the space of $n$-tuples of pairwise commuting such matrices. The orthogonal group $O(m)$ acts on $C_n(\mathfrak{sym}_m)$ by simultaneous conjugation and this extends to an action on the one-point compactification $C_n(\mathfrak{sym}_s)^+$.

The real analogue of Theorem \ref{thm:unstablerankfiltrationku} then reads:
\begin{theorem} \label{thm:unstablerankfiltrationko}
For $s\in \N$ and $n\in \N_{>0}$, there is a $\Sigma_n$-equivariant homeomorphism
\[
R^s(ko_n)/R^{s-1}(ko_{n})\cong C_n(\mathfrak{sym}_s)^+\wedge_{O(n)} L(\R^s,\mathrm{Sym}(\R^n))_+\, .
\]
\end{theorem}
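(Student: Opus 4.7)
The plan is to run the argument of Theorem \ref{thm:unstablerankfiltrationku} in the real setting, with Lemma \ref{lem:modelreal} replacing Lemma \ref{lem:model} and a real version of the Cayley transform replacing $\psi_s$. As usual the case $s=0$ is trivial, so I assume $s\geq 1$. By Lemma \ref{lem:modelreal}, $ko_n\cong \overline{\mathscr{C}}_n^{\mathrm{sym}}$, and under this identification $R^s(ko_n)$ corresponds to the subspace $R^s(\overline{\mathscr{C}}_n^{\mathrm{sym}})$ consisting of classes $[(A_1,\dots,A_n)]$ whose associated subspace $F(A_1,\dots,A_n)\subseteq \mathrm{Sym}(\C^n)$ has complex dimension at most $s$; because each $A_j$ is symmetric unitary, $F(A_1,\dots,A_n)$ is automatically the complexification of a real subspace of $\mathrm{Sym}(\R^n)$ of real dimension equal to $\dim_{\C}F$.

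For a finite-dimensional real subspace $V\subseteq\mathrm{Sym}(\R^n)$, the next step is to produce a homeomorphism
\[
R^s(\overline{\mathscr{C}}_n^{\mathrm{sym}}(V))-R^{s-1}(\overline{\mathscr{C}}_n^{\mathrm{sym}}(V))\cong \bigl(C_n(U(s)^{\mathrm{sym}})-sC_n(U(s)^{\mathrm{sym}})\bigr)\times_{O(s)}L(\R^s,V)
\]
analogous to (\ref{eq:homeo}), where $U(s)^{\mathrm{sym}}$ is the space of symmetric unitary $s\times s$-matrices and $sC_n(U(s)^{\mathrm{sym}})$ collects those $n$-tuples with $B_i-\mathrm{Id}$ singular for some $i$. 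Given $[(A_1,\dots,A_n)]$ on the left, one chooses a real isometric embedding $f\co \R^s\hookrightarrow V$ whose complexification has image $F(A_1,\dots,A_n)$, and sets $B_i=(f')^{-1}A_i|_{F}f'$; the ambiguity in $f$ is exactly the $O(s)$-action, since passing to a non-orthogonal element of $U(s)$ would not preserve the real structure on $F(A_1,\dots,A_n)$. I would then introduce the real Cayley transform
\[
\psi_s^{\R}\co \mathfrak{sym}_s\to U(s)^{\mathrm{sym}}\,,\qquad X\mapsto (X-i\,\mathrm{Id})(X+i\,\mathrm{Id})^{-1}\,,
\]
and verify that it is an $O(s)$-equivariant homeomorphism onto $\{A\in U(s)^{\mathrm{sym}}\mid A-\mathrm{Id}\textnormal{ non-singular}\}$ — equivalently, it is the restriction of $\psi_s$ along the inclusion $i\cdot\mathfrak{sym}_s\hookrightarrow\mathfrak{u}_s$ (reparametrised by $X\leftrightarrow iX$), since an element of $\mathfrak{u}_s$ is purely imaginary precisely when its complex Cayley transform is symmetric unitary. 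Applying $\psi_s^{\R}$ componentwise then yields an $O(s)$-equivariant homeomorphism $C_n(U(s)^{\mathrm{sym}})-sC_n(U(s)^{\mathrm{sym}})\xrightarrow{\cong} C_n(\mathfrak{sym}_s)$.

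Combining these two steps and one-point compactifying (using that both $R^s(\overline{\mathscr{C}}_n^{\mathrm{sym}}(V))$ and $L(\R^s,V)$ are compact for finite-dimensional $V$, with $R^{s-1}$ closed in $R^s$) gives the desired homeomorphism at level $V$, natural in $V$; taking the colimit over finite-dimensional $V\subseteq\mathrm{Sym}(\R^n)$ produces the asserted homeomorphism with $L(\R^s,\mathrm{Sym}(\R^n))_+$. The $\Sigma_n$-equivariance is checked exactly as in the complex case. The only real work is the verification that $\psi_s^{\R}$ is well-defined, bijective, and equivariant for $O(s)$ rather than $U(s)$; this is the one point at which the real and complex arguments genuinely diverge, and it reduces to the observation that $gXg^{-1}\in\mathfrak{sym}_s$ for all $X\in\mathfrak{sym}_s$ precisely when $g\in O(s)$, once one notes that the image of $\psi_s^{\R}$ inside $U(s)$ is simply the intersection of the image of $\psi_s$ with $U(s)^{\mathrm{sym}}$.
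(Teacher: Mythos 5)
Your proposal is correct and follows the same route as the paper: complexify via Lemma \ref{lem:modelreal}, then rerun the proof of Theorem \ref{thm:unstablerankfiltrationku} with a real Cayley transform onto $\{A\in U(s)^{\mathrm{sym}}\mid A-\mathrm{Id}\text{ nonsingular}\}$ in place of $\psi_s$. The only slip is cosmetic: your formula $\psi_s^{\R}(X)=(X-i\,\mathrm{Id})(X+i\,\mathrm{Id})^{-1}$ equals $\psi_s(-iX)$ rather than $\psi_s(iX)$ as your parenthetical suggests, but both are $O(s)$-equivariant homeomorphisms onto the same target, so nothing is affected.
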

In particular, this gives Theorem \ref{thm:intro3} from the introduction.
\begin{proof}
The key fact is that the Cayley transform $\psi_s\co \mathfrak{u}_s\to U(s)$ is equivariant with respect to transposition, which is evident from the formula displayed in (\ref{eq:cayley}). Since a skew-Hermitian matrix is symmetric if and only if it is of the form $iX$ for a real symmetric matrix $X$, the Cayley transform restricts to an $O(s)$-equivariant map
\[
\mathfrak{sym}_s \to U(s)^{\mathrm{sym}}
\]
sending $X \mapsto \psi_s(iX)$. This map is a homeomorphism onto the subspace of those $A\in U(s)^{\mathrm{sym}}$  such that $A-\mathrm{Id}$ is non-singular. The rest of the proof is entirely parallel to the proof of Theorem \ref{thm:unstablerankfiltrationku}.
\end{proof}

There is an $O(s)$-equivariant decomposition $\mathfrak{sym}_s\cong \overline{\mathfrak{sym}}_s\oplus \R$, where $ \overline{\mathfrak{sym}}_s$ is the subspace of traceless matrices on which $O(s)$ acts by conjugation, and $\R$ represents the scalar matrices with trivial $O(s)$-action. This decomposition induces a $(O(s)\times \Sigma_n)$-equivariant homeomorphism
\[
C_n(\mathfrak{sym}_s)^+\cong C_n(\overline{\mathfrak{sym}}_s)^+\wedge S^n\, .
\]
This yields a description of the symmetric spectrum $R^s(ko)/R^{s-1}(ko)$ along the lines of Construction \ref{cons:quotientspectrum} using $C_n(\overline{\mathfrak{sym}}_s)^+$ instead of $C_n(\mathfrak{su}_s)^+$. In particular, we have the following variant of Theorem \ref{thm:intro2}.
\begin{theorem} \label{thm:homotopytypereal}
There is an isomorphism in the stable homotopy category
\[
R^s(ko)/R^{s-1}(ko)\cong \Sigma^\infty\left(C_\infty(\overline{\mathfrak{sym}}_s)^+ \wedge_{O(s)} EO(s)_+\right)\, .
\]
\end{theorem}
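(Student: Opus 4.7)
The plan is to parallel, step for step, Propositions \ref{prop:stablequotient} and \ref{prop:homotopytype}, substituting $\mathfrak{su}_s \leadsto \overline{\mathfrak{sym}}_s$, $U(s) \leadsto O(s)$, $\mathbb{C} \leadsto \mathbb{R}$, and $ku \leadsto ko$ throughout. Most of the setup is already in place: combining Theorem \ref{thm:unstablerankfiltrationko} with the $(O(s)\times \Sigma_n)$-equivariant decomposition $C_n(\mathfrak{sym}_s)^+ \cong C_n(\overline{\mathfrak{sym}}_s)^+ \wedge S^n$ coming from the trace splitting $\mathfrak{sym}_s \cong \overline{\mathfrak{sym}}_s \oplus \mathbb{R}$, one presents $R^s(ko)/R^{s-1}(ko)$ as a symmetric spectrum $C^{\mathbb{R}}(s)$ with
\[
C^{\mathbb{R}}(s)_n = (C_n(\overline{\mathfrak{sym}}_s)^+ \wedge S^n) \wedge_{O(s)} L(\mathbb{R}^s, \mathrm{Sym}(\mathbb{R}^n))_+,
\]
and structure maps built from $e_{n,m}\co C_n(\overline{\mathfrak{sym}}_s)^+ \to C_{n+m}(\overline{\mathfrak{sym}}_s)^+$ (appending zero matrices) together with the inclusion of the $\mathbb{R}\cdot 1$ summand in $\mathrm{Sym}(\mathbb{R}^m)$, exactly as in Construction \ref{cons:quotientspectrum}. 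The remaining content of the theorem is thus to identify $C^{\mathbb{R}}(s)$ with $\Sigma^\infty(C_\infty(\overline{\mathfrak{sym}}_s)^+ \wedge_{O(s)} EO(s)_+)$ in the stable homotopy category.

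The technical heart of the argument is the real analogue of Lemma \ref{lem:connectivity}: the stabilization map
\[
C_n(\overline{\mathfrak{sym}}_s)^+ \wedge_{O(s)} L(\mathbb{R}^s, \mathrm{Sym}(\mathbb{R}^n))_+ \longrightarrow C_{n+1}(\overline{\mathfrak{sym}}_s)^+ \wedge_{O(s)} L(\mathbb{R}^s, \mathrm{Sym}(\mathbb{R}^{n+1}))_+
\]
is $(n-1)$-connected. The input is identical to the complex case: $R^s(ko)$ arises from a special $\Gamma$-space, hence is connective and an $\Omega$-spectrum above level $0$, so $R^s(ko_n)$ is $(n-1)$-connected. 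Freudenthal then makes the structure map $\Sigma R^s(ko_n) \to R^s(ko_{n+1})$ roughly $2n$-connected, and the suspension isomorphism applied to the relative homology of the cofibre transfers this into vanishing of $H_i(A^{\mathbb{R}}_{n+1}, A^{\mathbb{R}}_n; \mathbb{Z})$ for $i \leq n-1$, where $A^{\mathbb{R}}_n$ denotes the half-smash above.

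The main obstacle is to upgrade this homology vanishing to a $\pi_\ast$-connectivity statement via the relative Hurewicz theorem; this requires that $A^{\mathbb{R}}_n$ be simply connected. I would establish this by proving a real analogue of \cite[Proposition A.6]{AGG}: for $s$ in a suitable range, the space $C_n(\overline{\mathfrak{sym}}_s)^+$ is simply connected, which is plausible since $O(s)$ has only two components and the path-components of $C_n(\overline{\mathfrak{sym}}_s)$ can be analyzed via simultaneous eigenspace decompositions as in Proposition \ref{prop:universal1}. The low-rank cases $s=1$ (where $\overline{\mathfrak{sym}}_1 = 0$ and the map is a homotopy equivalence) and $s=2$ should be handled separately; alternatively, since we only need a stable equivalence of spectra at the end, one can bypass Hurewicz entirely and work with homology throughout.

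With the connectivity lemma in hand, the proof concludes as in Proposition \ref{prop:homotopytype}. Since $R^s(ko)$ and $R^{s-1}(ko)$ come from orthogonal spectra, $C^{\mathbb{R}}(s)$ is semi-stable, so its stable homotopy type is determined by its underlying pre-spectrum. Replacing the varying universe $\mathrm{Sym}(\mathbb{R}^n)$ by the fixed $\mathrm{Sym}(\mathbb{R})$ yields a level-equivalent pre-spectrum $X^{\mathbb{R}}(s)$ with $n$-th space $(C_n(\overline{\mathfrak{sym}}_s)^+ \wedge S^n) \wedge_{O(s)} EO(s)_+$. The canonical map
\[
X^{\mathbb{R}}(s) \longrightarrow \Sigma^\infty\bigl(C_\infty(\overline{\mathfrak{sym}}_s)^+ \wedge_{O(s)} EO(s)_+\bigr)
\]
induced in level $n \geq 1$ by the inclusion $C_n(\overline{\mathfrak{sym}}_s)^+ \hookrightarrow C_\infty(\overline{\mathfrak{sym}}_s)^+$ is then an isomorphism on stable homotopy groups by the connectivity lemma, hence an isomorphism in the stable homotopy category, which is the desired conclusion.
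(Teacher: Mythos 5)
Your proposal follows the same overall strategy as the paper: replicate Constructions~\ref{cons:quotientspectrum}, Propositions~\ref{prop:stablequotient} and \ref{prop:homotopytype}, and the connectivity Lemma~\ref{lem:connectivity} with the substitutions $\mathfrak{su}_s\leadsto\overline{\mathfrak{sym}}_s$, $U(s)\leadsto O(s)$, and $ku\leadsto ko$. You correctly identify the one place where the argument is not a verbatim transcription, namely the need to know that $C_n(\overline{\mathfrak{sym}}_s)^+$ is simply-connected so that the relative Hurewicz theorem applies; you also correctly flag $s=1,2$ as low-rank special cases and propose a valid fallback of working with integral homology of connective spectra instead of invoking Hurewicz.

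The genuine gap is that you only assert the existence of a ``real analogue of \cite[Proposition A.6]{AGG}'' without giving the argument, and the heuristic you offer (two components of $O(s)$, eigenspace decompositions) is not itself a proof. The paper closes this gap concretely as follows. Write $C_n(\overline{\mathfrak{sym}}_s)^+\cong C_n^1(\overline{\mathfrak{sym}}_s)^\lozenge$, the unreduced suspension of the unit-norm commuting tuples with respect to an $O(s)$-invariant inner product; an unreduced suspension of a path-connected space is simply-connected, so it suffices to prove $C_n^1(\overline{\mathfrak{sym}}_s)$ is path-connected. For $s\geq 3$ one uses the spectral theorem for real symmetric matrices: the conjugation map $SO(s)\times S(n\overline{\rho}_s)\to C_n^1(\overline{\mathfrak{sym}}_s)$, $(A,X_1,\dots,X_n)\mapsto(AX_1A^{-1},\dots,AX_nA^{-1})$, is surjective, where $\overline{\rho}_s$ is the space of traceless diagonal matrices; since $SO(s)$ and the sphere $S(n\overline{\rho}_s)\subseteq\R^{n(s-1)}$ are both path-connected (the latter because $n(s-1)\geq 2$), so is the image. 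The case $s=2$ is handled separately: $C_n^1(\overline{\mathfrak{sym}}_2)$ is the unit sphere bundle of the $n$-fold sum of the M\"obius bundle over $\R P^1$, hence path-connected. With this point supplied, your proposal matches the paper's proof.
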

\begin{proof}
The proof is analogous to that of Theorem \ref{thm:intro2}. To establish the analogue of Lemma \ref{lem:connectivity} we must see that $C_n(\overline{\mathfrak{sym}}_s)^+$ is simply-connected if $s\geq 2$. Fix an $O(s)$-invariant inner product on $\overline{\mathfrak{sym}}_s$ and let $C_n^1(\overline{\mathfrak{sym}}_s)$ denote the subspace of commuting $n$-tuples of unit norm. Since $C_n(\overline{\mathfrak{sym}}_s)^+\cong C_n^1(\overline{\mathfrak{sym}}_s)^\lozenge$, it suffices to see that $C_n^1(\overline{\mathfrak{sym}}_s)$ is path-connected. When $s=2$ it is elementary to see that $C_n^1(\overline{\mathfrak{sym}}_s)$ is the unit sphere bundle in the $n$-fold sum of the M{\"o}bius bundle over $\R P^1$, thus path-connected. Now assume $s\geq 3$. Let $\overline{\rho}_s\subseteq \overline{\mathfrak{sym}}_s$ denote the subspace of diagonal matrices and let $S(n\overline{\rho}_s)$ be the unit sphere in $\overline{\rho}_s^n\cong \R^{n(s-1)}$ (as a $\Sigma_s$-space, $\overline{\rho}_s$ is the reduced standard representation, hence the notation). Since the map
\begin{equation*}
\begin{split}
SO(s) \times  S(n\overline{\rho}_s) &\to C_n^1(\overline{\mathfrak{sym}}_s) \\
(A,X_1,\dots,X_n) & \mapsto (AX_1A^{-1},\dots,AX_nA^{-1})
\end{split}
\end{equation*}
is surjective (by the spectral theorem) and the domain is path-connected, the space $C_n^1(\overline{\mathfrak{sym}}_s)$ is path-connected.
\end{proof}

Finally, we have the following relationship of $C_\infty^1(\overline{\mathfrak{sym}}_s)$ with the complex of proper direct-sum decompositions of $\R^s$. Let $\mathcal{L}_s^{\R}$ denote the collection of complete subgroups of $O(s)$, i.e., subgroups that are conjugate to $O(n_1)\times \cdots \times O(n_k)$ for some $k>1$ and $\sum_{j} n_j=s$. As in Proposition \ref{prop:universal1} one shows:

\begin{proposition} \label{prop:universal2}
The $O(s)$-space $C_\infty^1(\overline{\mathfrak{sym}}_s)$ is universal for the collection $\mathcal{L}_s^{\R}$.
\end{proposition}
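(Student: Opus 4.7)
The plan is to mirror the proof of Proposition \ref{prop:universal1} step by step, substituting $(\overline{\mathfrak{sym}}_s, O(s))$ for $(\mathfrak{su}_s, U(s))$ throughout. The two standard inputs required in the real setting are that commuting real symmetric matrices are simultaneously diagonalised by an orthogonal change of basis (the real spectral theorem), and that the centraliser of $O(n_j)$ inside $\mathfrak{sym}_{n_j}$ under conjugation consists of the scalar matrices $\R\cdot \mathrm{Id}_{n_j}$.

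To show that the isotropy groups lie in $\mathcal{L}_s^{\R}$, I would take a representative $(X_1,\dots,X_n)\in C_n^1(\overline{\mathfrak{sym}}_s)$ and use the real spectral theorem to obtain the unique coarsest orthogonal decomposition $\R^s = \bigoplus_{j=1}^k V_j$ on which every $X_i$ acts by a real scalar $x_{ij}$, with the tuples $(x_{1j},\dots,x_{nj})$ pairwise distinct across $j$. An element $g\in O(s)$ fixes $(X_1,\dots,X_n)$ under simultaneous conjugation if and only if it commutes with each $X_i$, which forces $g$ to preserve each joint eigenspace $V_j$. Hence the isotropy subgroup is conjugate to $O(n_1)\times\cdots\times O(n_k)$ with $n_j = \dim_{\R}V_j$. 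If $k=1$ then every $X_i$ is a scalar matrix, and the traceless condition forces $X_i = 0$ for all $i$, contradicting $\sum_i \norm{X_i}^2 = 1$. Therefore $k>1$ and the isotropy lies in $\mathcal{L}_s^{\R}$.

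For the fixed points, I would fix $K = O(n_1)\times\cdots\times O(n_k)\in \mathcal{L}_s^{\R}$ and analyse $C_\infty^1(\overline{\mathfrak{sym}}_s)^K$. A symmetric matrix $X$ commutes with $K$ if and only if it is block-diagonal with respect to $\R^s = \R^{n_1}\oplus\cdots\oplus \R^{n_k}$ and acts on the $j$-th block by a scalar (by Schur's lemma applied to the standard irreducible $O(n_j)$-representation on $\R^{n_j}$ when $n_j\geq 2$, and trivially when $n_j=1$). Imposing the traceless condition cuts out a subspace $\mathfrak{h}_K\subseteq \overline{\mathfrak{sym}}_s$ of real dimension $k-1$, and $C_n^1(\overline{\mathfrak{sym}}_s)^K = S(\mathfrak{h}_K^n)$ is a sphere of dimension $n(k-1)-1$. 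Passing to the colimit $n\to\infty$, the dimension grows without bound (since $k>1$), yielding $S^\infty$, which is contractible.

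There is no substantial obstacle beyond the real centraliser computation noted above, which is standard. All other steps amount to bookkeeping parallel to the complex case, with the three structural points — the existence of a coarsest simultaneous eigenspace decomposition, the fact that $k>1$ on the unit sphere, and the infinite-dimensionality of the fixed-point spheres as $n\to\infty$ — going through without change.
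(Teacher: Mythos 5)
Your proposal is correct and follows essentially the same approach as the paper, which presents Proposition \ref{prop:universal2} as a direct analogue of Proposition \ref{prop:universal1} and leaves the details to the reader; your substitutions of the real spectral theorem and the centraliser computation for $O(n_j)$ are exactly the intended adaptations. One minor point of rigour: real Schur's lemma only guarantees that the endomorphism algebra of an irreducible real representation is a real division algebra (so a priori $\R$, $\C$, or $\H$); here the cleanest route to $\R\cdot\mathrm{Id}$ is to note that a symmetric $X$ commuting with $O(n_j)$ has $O(n_j)$-invariant eigenspaces, which must equal $\R^{n_j}$ since the standard representation is irreducible, forcing $X$ to be a single real scalar.
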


The proposition implies that $C_\infty^1(\overline{\mathfrak{sym}}_s)=E\mathcal{L}_s^{\R}\simeq |\mathcal{L}_s^{\R}|$. Unlike the complex case, however, this does not lead to a computation of the mod-$p$ cohomology of the complete unordered flag manifold $\overline{\mathrm{Fl}}(\R^p)=O(p)/(\Sigma_p\wr O(1))$.

\end{document}